\documentclass[11pt,bezier]{article}
\usepackage{amsmath,amssymb,amsfonts,euscript,graphicx}

\textwidth = 15 cm \textheight = 20 cm \oddsidemargin =0.7 cm
\evensidemargin = -3 cm \topmargin = 1 cm
\parskip = 2 mm

\newtheorem{prethm}{{\bf Theorem}}

\newenvironment{thm}{\begin{prethm}{\hspace{-0.5
               em}{\bf.}}}{\end{prethm}}

\newtheorem{prepro}[prethm]{{\bf Theorem}}

\newtheorem{preprop}[prethm]{{\bf Proposition}}

\newtheorem{precor}[prethm]{{\bf Corollary}}

\newenvironment{cor}{\begin{precor}{\hspace{-0.5
               em}{\bf.}}}{\end{precor}}

\newtheorem{preconj}[prethm]{{\bf Conjecture}}

\newtheorem{preremark}[prethm]{{\bf Remark}}

\newenvironment{remark}{\begin{preremark}\rm{\hspace{-0.5
               em}{\bf.}}}{\end{preremark}}

\newtheorem{preexample}[prethm]{{\bf Example}}

\newtheorem{prelem}[prethm]{{\bf Lemma}}

\newenvironment{lem}{\begin{prelem}{\hspace{-0.5
               em}{\bf.}}}{\end{prelem}}

\newtheorem{prelam}{{\bf Lemma}}

\newtheorem{preproof}{{\bf Proof.}}

\newenvironment{proof}[1]{\begin{preproof}{\rm
               #1}\hfill{$\Box$}}{\end{preproof}}


\title{\bf \large When the Annihilator Graph of a Commutative Ring \\Is Planar or Toroidal?  \thanks
{{\it Key Words}: Annihilator graph, Planarity, Toroidality.\newline
{\indent{~~2010 {\it Mathematics Subject Classification}: 13A99, 05C10.}}}}
\author{{\normalsize  {\sc M.J. Nikmehr${}^{\mathsf{a}}$, {\sc R. Nikandish${}^{\mathsf{b}}$} and {\sc M. Bakhtyiari${}^{\mathsf{a}}$}  }
}\vspace{3mm}\\
{\footnotesize{${}^{\mathsf{a}}$\it Faculty of Mathematics, K.N. Toosi
University of Technology, }}\\
{\footnotesize{\rm P.O. BOX \rm{16315-1618}, Tehran, Iran}}\\
{\footnotesize{ $\mathsf{nikmehr@kntu.ac.ir}$}}\quad\quad
{\footnotesize{$\mathsf{m.bakhtyiari55@gmail.com}$}}\\
{\footnotesize{${}^{\mathsf{b}}$\it Department of Mathematics, Jundi-Shapur University of Technology,}}\\
{\footnotesize{\rm P.O. BOX \rm{64615-334},
Dezful, Iran}}\\
{\footnotesize{ $\mathsf{r.nikandish@jsu.ac.ir}$}}\\
{\footnotesize{$\mathsf{}$ }}}

\date{}

\begin{document}

\maketitle

\begin{abstract}
{\small Let $R$ be a commutative ring with identity, and let $Z(R)$ be the set of zero-divisors of $R$. The  annihilator graph of $R$ is defined as the undirected graph $AG(R)$ with the vertex set $Z(R)^*=Z(R)\setminus\{0\}$, and two distinct vertices $x$ and $y$ are adjacent if and only if $ann_R(xy)\neq ann_R(x)\cup ann_R(y)$. In this paper, all rings whose annihilator graphs can be embed on the plane or torus are classified.  }
\end{abstract}
\section{Introduction}

Recently, a major part of research in algebraic combinatorics has been devoted to the application of graph theory and
combinatorics in abstract algebra. There are a lot of papers which apply combinatorial
methods to obtain algebraic results in ring theory (see \cite{nikak}, \cite{nikandish}, \cite{Badawi} and \cite{Heydari}). Moreover, for most recent
study in this field see \cite{ab} and \cite{su}.
\par
Throughout this paper $R$ is a commutative ring with identity which is not an integral domain. We denote by  $\mathrm{Min}(R)$, $\mathrm{Nil}(R)$ and $\mathrm{U}(R)$, the set of all minimal prime ideals of $R$, the set of all nilpotent elements of $R$ and the set of all invertible elements of $R$, respectively.
Also, the set of all zero-divisors of an $R$-module $M$, which is denoted by $Z(M)$, is the set $$Z(M)=\{r\in R \, |\,rx=0, \rm{\,for\,\, some\,\, nonzero\,\, element}\,\, {\it x} \,\,\rm{ in}\,\, {\it M}\}.$$
A finite field of order $n$ is denoted by $\mathbb{F}_n$. By $\mathrm{dim}(R)$ and $\mathrm{depth}(R)$, we mean the dimension and depth of $R$, see \cite{sha}.  For every ideal $I$ of $R$, we denote the  annihilator of $I$ by $\mathrm{Ann}(I)$. For a subset $A$ of a ring $R$ we let $A^*=A\setminus\{0\}$.  The ring $R$ is said to be \textit{reduced} if it has no non-zero
nilpotent element. Let $R$ be a Noetherian local ring. Then $R$ is said to be a \textit{Cohen-Macaulay} ring if $\mathrm{depht}(R)=\mathrm{dim}(R)$. In general, if $R$ is a Noetherian ring, then $R$ is a Cohen-Macaulay ring if $R_\mathfrak{m}$ is a Cohen-Macaulay ring, for all maximal ideals $\mathfrak{m}$, where $R_\mathfrak{m}$ is the localization of $R$ at $\mathfrak{m}$. Also, a Noetherian local ring $R$ is called \textit{Gorenstein} if $R$ is Cohen-Macaulay and $\mathrm{dim}_{R/\mathfrak{m}}(\mathrm{soc}(R))=1$, where $\mathfrak{m}$ is the unique maximal ideal of $R$. In general, if $R$ is a Noetherian ring, then $R$ is a Gorenstein ring if $R_\mathfrak{m}$ is a Gorenstein ring, for all maximal ideals $\mathfrak{m}$. For any undefined notation or terminology in ring theory, we refer the reader to \cite{her, sha}.
\par
 Let $G=(V,E)$ be a graph, where $V=V(G)$ is the set of vertices and $E=E(G)$ is the set of edges. By $K_n$ and $K_{m,n}$, we mean the complete graph of order $n$ and the complete bipartite graph with part sizes $m$ and $n$, respectively. Moreover, by $\overline{G}$ we denote the complement of $G$. The graph $H=(V_0,E_0)$ is a subgraph of $G$ if $V_0\subseteq V$ and $E_0 \subseteq E$. Moreover, $H$ is called an \textit{induced subgraph by} $V_0$,  denoted by $G[V_0]$, if $V_0\subseteq V$ and $E_0=\{\{u,v\}\in E\, |\,u,v\in V_0\}$. Let $G_1$ and $G_2$ be two graphs.  \textit{The subdivision of a graph} $G$ is a graph obtained from $G$ by subdividing some of the edges, that is, by replacing the edges by paths having at most their endvertices in common. By $G_1\vee G_2$ and $G_1=G_2$, we mean the $\it join$ of $G_1$, $G_2$ and $G_1$ is identical to $G_2$, respectively. Let $S_k$ denote the sphere with $k$ handles, where $k$ is a non-negative integer, that is, $S_k$ is an oriented surface of genus $k$. The \textit {genus} of a graph $G$, denoted $\gamma(G)$, is the minimal integer $n$ such that the graph can be embedded in $S_n$ (see \cite[Chapter 6]{west}). Intuitively, $G$ is embedded in a surface if it can be drawn in the surface so that its edges intersect only at their common vertices. A genus $0$ graph is called a \textit{planar graph} and a genus $1$ graph is called a \textit{toroidal graph}.  It is well known that $$\gamma(K_n)={\lceil \dfrac{(n-3)(n-4)}{12}\rceil}  \,\,\,\,\,\,\,\, \rm {if}\,\,\, \textit{n}\geq 3 \,\,\, \rm{and}$$
 $$\gamma(K_{m,n})={\lceil \dfrac{(m-2)(n-2)}{4}\rceil}  \,\,\,\,\,\,\,\, \rm {if}\,\,\, \textit{n,m}\geq 2 \,\,\, $$
 For any undefined notation or terminology in ring theory, we refer the reader to \cite{west}.
 \par
 \noindent The  \textit{annihilator graph} of a ring $R$ is defined as the graph $AG(R)$ with the vertex set $Z(R)^*=Z(R)\setminus\{0\}$, and two distinct vertices $x$ and $y$ are adjacent if and only if $ann_R(xy)\neq ann_R(x)\cup ann_R(y)$. This graph was first introduced and investigated in \cite{Badawi} and many of interesting properties of an annihilator graph were studied. This paper is devoted to classify all rings whose annihilator graphs are planar or toroidal.
{\section{Planar Annihilator  Graphs}\vspace{-2mm}

In this section, we characterize all rings whose annihilator graphs are planar. Moreover, it is shown that the genus of the annihilator graph associated with an infinite ring is either zero or infinite. First, we recall a series of necessary results.
\begin{lem}\label{lemma1} {\rm \cite[Lemma 2.1]{ColorBadawi}}
 Let $R$ be a ring and $x,y$ be distinct elements  of $Z(R)^*$. Then the following statements are equivalent.

 $(1)$ $x-y$ is an edge of $AG(R)$.

 $(2)$ $Rx\cap ann_R(y)\neq(0)$ and $Ry\cap ann_R(x)\neq(0)$.

 $(3)$ $x\in Z(Ry)$ and $y\in Z(Rx)$.
\end{lem}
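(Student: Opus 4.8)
The plan is to establish $(1)\Rightarrow(2)$, then $(2)\Rightarrow(1)$, and finally to observe that $(2)\Leftrightarrow(3)$ is nothing more than a translation of the definition of the zero-divisors of the $R$-modules $Rx$ and $Ry$.

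I would begin with the harmless observation that $ann_R(x)\cup ann_R(y)\subseteq ann_R(xy)$ for any $x,y$, since $ax=0$ or $ay=0$ forces $axy=0$. Consequently the defining condition $ann_R(xy)\neq ann_R(x)\cup ann_R(y)$ for $x-y$ to be an edge of $AG(R)$ is equivalent to the existence of an element $a\in R$ with $axy=0$ but $ax\neq 0$ and $ay\neq 0$. This reformulation is the tool I will use throughout.

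For $(1)\Rightarrow(2)$: choosing $a$ as above, the element $ax$ is nonzero, lies in $Rx$, and satisfies $(ax)y=axy=0$, so $ax\in Rx\cap ann_R(y)$ shows $Rx\cap ann_R(y)\neq(0)$; symmetrically $ay$ witnesses $Ry\cap ann_R(x)\neq(0)$. For $(2)\Rightarrow(1)$: pick $r$ with $0\neq rx$ and $rxy=0$, and $s$ with $0\neq sy$ and $sxy=0$. If $ry\neq 0$, take $a=r$; if $sx\neq 0$, take $a=s$; otherwise $ry=sx=0$, and then $a=r+s$ satisfies $(r+s)xy=0$, $(r+s)x=rx\neq 0$ and $(r+s)y=sy\neq 0$. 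In every case $a$ is the witness demanded by the reformulation, so $x-y$ is an edge. For $(2)\Leftrightarrow(3)$: unraveling the definitions, $x\in Z(Ry)$ says precisely that some nonzero $ry\in Ry$ is killed by $x$, i.e.\ $Ry\cap ann_R(x)\neq(0)$, and likewise $y\in Z(Rx)$ is the same as $Rx\cap ann_R(y)\neq(0)$; hence $(3)$ is literally a rewriting of $(2)$.

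The only step that is not purely mechanical is the degenerate case of $(2)\Rightarrow(1)$, in which neither of the naturally produced elements $r$ and $s$ works on its own and one must pass to the sum $r+s$; everything else is bookkeeping with the inclusion $ann_R(x)\cup ann_R(y)\subseteq ann_R(xy)$, so I do not expect any real obstacle.
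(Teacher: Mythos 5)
Your proof is correct: the reformulation of the edge condition as the existence of some $a\in ann_R(xy)\setminus\big(ann_R(x)\cup ann_R(y)\big)$, the witnesses $ax$ and $ay$ for $(1)\Rightarrow(2)$, the case analysis (including the degenerate case handled by $r+s$) for $(2)\Rightarrow(1)$, and the unraveling of $x\in Z(Ry)$, $y\in Z(Rx)$ for $(2)\Leftrightarrow(3)$ all check out. The paper itself gives no proof of this lemma (it is quoted from the cited reference), and your argument is essentially the standard one used there, so there is nothing materially different to compare.
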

\begin{lem}\label{lemma2} {\rm \cite[Lemma 2.2]{ColorBadawi}}
 Let $R$ be a ring.

 $(1)$ Let $x, y$ be elements of $Z(R)^*$. If $ann_R(x)\nsubseteq ann_R(y)$ and $ann_R(y)\nsubseteq ann_R(x)$, then  $x-y$ is an edge of $AG(R)$. Moreover, if $R$ is a reduced ring, then the converse is also true.

 $(2)$ Let $R\cong R_1\times \cdots\times R_n$, $x= (x_1,\dots, x_n)$ and $y= (y_1,\dots,y_n)$, where $n$ is a positive integer, every $R_i$ is a ring and $x_i,y_i\in R_i$, for every $1\leq i\leq n$. If $R_ix_i\cap ann_{R_i}(y_i)\neq(0)$ and $R_jy_j\cap ann_{R_j}(x_j)\neq (0)$, for some $1\leq i,j\leq n$, then  $x-y$ is an edge of $AG(R)$. In particular, if $x_i-y_i$ is an edge of $AG(R_i)$ or $x_i=y_i\in\mathrm{Nil}(R_i)^* $, for some $1\leq i\leq n$, then  $x-y$ is an edge of $AG(R)$.
 \end{lem}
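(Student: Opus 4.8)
The plan is to route both parts through the edge criterion of Lemma~\ref{lemma1}, supported by one elementary remark used throughout: in any commutative ring, $ax=0$ gives $axy=0$ and $ay=0$ gives $axy=0$, so $ann_R(x)\cup ann_R(y)\subseteq ann_R(xy)$ always holds. Hence $x-y$ fails to be an edge of $AG(R)$ precisely when this inclusion is an equality, and $x-y$ \emph{is} an edge precisely when some $t\in R$ satisfies $txy=0$ while $tx\ne0$ and $ty\ne0$. Given this, the forward implication of part~$(1)$ drops out at once: by the two non-containments pick $a\in ann_R(x)\setminus ann_R(y)$ and $b\in ann_R(y)\setminus ann_R(x)$ and put $t=a+b$; then $txy=(ax)y+(by)x=0$ whereas $tx=bx\ne0$ and $ty=ay\ne0$, so $x-y$ is an edge. (Note the hypotheses already force $x\ne y$.)

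For the converse of part~$(1)$, with $R$ reduced, I would prove the contrapositive. Assuming say $ann_R(x)\subseteq ann_R(y)$ (the symmetric case is identical), it suffices to show $ann_R(xy)\subseteq ann_R(y)$, for then $ann_R(xy)=ann_R(x)\cup ann_R(y)$ and $x-y$ is not an edge. This is where reducedness is actually consumed: for $t\in ann_R(xy)$ we have $(ty)x=0$, so $ty\in ann_R(x)\subseteq ann_R(y)$, giving $ty^2=0$; multiplying by $t$ yields $(ty)^2=0$, so $ty=0$ because $R$ has no nonzero nilpotents. I expect this squaring step to be the only genuinely delicate point of the lemma --- everything else is bookkeeping with Lemma~\ref{lemma1}.

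For part~$(2)$ I would manufacture the witnesses required by Lemma~\ref{lemma1} using elements of $R$ supported on a single coordinate of the product. Suppose $0\ne r_ix_i\in R_ix_i\cap ann_{R_i}(y_i)$ and $0\ne s_jy_j\in R_jy_j\cap ann_{R_j}(x_j)$, and let $r\in R\cong R_1\times\cdots\times R_n$ be the element with $i$-th coordinate $r_i$ and all other coordinates $0$. Then $rx\in Rx$ has $i$-th coordinate $r_ix_i\ne0$, so $rx\ne0$, while $(rx)y$ has $i$-th coordinate $r_ix_iy_i=0$ and all other coordinates $0$, so $0\ne rx\in Rx\cap ann_R(y)$; the symmetric choice at coordinate $j$ gives $0\ne sy\in Ry\cap ann_R(x)$. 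Lemma~\ref{lemma1} then makes $x-y$ an edge of $AG(R)$. Finally the ``in particular'' clause is the case $i=j$: if $x_i-y_i$ is an edge of $AG(R_i)$, Lemma~\ref{lemma1} inside $R_i$ supplies the required nonzero members of $R_ix_i\cap ann_{R_i}(y_i)$ and $R_iy_i\cap ann_{R_i}(x_i)$; and if $x_i=y_i\in\mathrm{Nil}(R_i)^*$, choosing $k\ge2$ minimal with $x_i^k=0$, the element $x_i^{k-1}$ is nonzero and lies in both $R_ix_i\cap ann_{R_i}(y_i)$ and $R_iy_i\cap ann_{R_i}(x_i)$.
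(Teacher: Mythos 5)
Your proposal is correct and complete. Note that the paper gives no argument of its own for this lemma: it is quoted verbatim from \cite[Lemma 2.2]{ColorBadawi}, so there is no in-text proof to compare with; your route --- the standing remark that $ann_R(x)\cup ann_R(y)\subseteq ann_R(xy)$ always holds, the witness $t=a+b$ for the forward half of $(1)$, the squaring step $(ty)^2=0$ (the only place reducedness is used) for the converse, and single-coordinate elements fed into the criterion of Lemma~\ref{lemma1} for $(2)$ --- is precisely the standard verification that the cited source carries out. The one point to keep explicit is that ``edge'' presupposes $x\neq y$ in $Z(R)^*$: in part $(1)$ this is forced by the two non-containments, as you observe, while in part $(2)$ it is an implicit hypothesis of the statement (for instance when $x_i=y_i\in\mathrm{Nil}(R_i)^*$ the elements $x$ and $y$ must differ in some other coordinate); your construction already yields $x,y\in Z(R)^*$, so nothing else is missing.
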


\begin{lem}\label{lemmae}
 Let $R$ be a reduced ring and contains a minimal ideal. Then $R$ is decomposable.
\end{lem}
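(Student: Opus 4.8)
The plan is to prove the cleaner statement that any minimal ideal $I$ of a reduced ring $R$ is generated by a nontrivial idempotent $e$; once this is in hand, the standard decomposition $R\cong Re\times R(1-e)$ with both factors nonzero shows that $R$ is decomposable.

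First I would observe that a minimal ideal is automatically principal: choosing $0\neq a\in I$, the ideal $Ra$ is nonzero (since $a=1\cdot a\in Ra$) and is contained in $I$, so minimality forces $Ra=I$. The next, and genuinely crucial, step is to exploit reducedness to upgrade the trivial inclusion $I^2\subseteq I$ to an equality. Indeed $I^2=(Ra)^2=Ra^2\subseteq I$; if $I^2=(0)$ then $a^2=0$ with $a\neq 0$, contradicting that $R$ is reduced. Hence $I^2=Ra^2$ is a nonzero ideal contained in $I$, and minimality gives $I^2=I$, that is, $Ra=Ra^2$.

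From $a\in Ra=Ra^2$ we obtain some $r\in R$ with $a=ra^2$. Setting $e:=ra$ we get $e^2=r^2a^2=r(ra^2)=ra=e$, so $e$ is idempotent, and since $ea=ra^2=a\neq 0$ we have $e\neq 0$. Finally I would rule out $e=1$: the equality $ra=e=1$ would make $a$ a unit, so $I=Ra=R$ would be a minimal ideal, which would force $R$ to have no nonzero proper ideal, i.e. to be a field — contradicting the standing assumption that $R$ is not an integral domain. Thus $e$ is a nontrivial idempotent, and $R\cong Re\times R(1-e)$ exhibits $R$ as decomposable.

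The argument is short, and the only place demanding care is the last step, where the blanket hypothesis that $R$ is not an integral domain (equivalently, not a field) must be invoked: without it the claim fails, since a field is a reduced ring that coincides with its own unique minimal ideal yet is indecomposable. The conceptual heart of the proof is the observation that reducedness promotes $I^2\subseteq I$ to $I^2=I$, after which the classical device of writing the generator $a$ as $ra^2$ produces the sought idempotent.
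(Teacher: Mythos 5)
Your proof is correct and is essentially the standard argument (Brauer's lemma: in a reduced ring a minimal ideal is generated by an idempotent) that underlies the reference \cite[2.7]{Rob} which the paper simply cites instead of proving. You also rightly flag that the idempotent must be ruled out from being $1$ via the paper's standing assumption that $R$ is not an integral domain, a point the paper leaves implicit.
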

\begin{proof}
 {The proof is obtained by \cite[2.7]{Rob}. }
\end{proof}
\par To classify planar annihilator graphs, we need a celebrated theorem due to Kuratowski.

 \begin{thm} {\rm (\cite[Theorem 6.2.2]{west}}
 A graph is planar if and only if it contains
no subdivision of either $K_{3,3}$ or $K_5$.
 \end{thm}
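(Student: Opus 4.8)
The plan is to prove the two implications separately. For the forward direction, I would first record that neither $K_5$ nor $K_{3,3}$ is planar: by the genus formulas recalled above, $\gamma(K_5)=\lceil 2/12\rceil=1$ and $\gamma(K_{3,3})=\lceil 1/4\rceil=1$, so neither embeds in $S_0$. Since planarity is preserved under deleting vertices and edges, and since subdividing an edge (equivalently, suppressing a vertex of degree $2$) does not affect planarity, any graph containing a subdivision of $K_5$ or $K_{3,3}$ inherits their non-planarity. The converse is the substance, and I would prove it by a minimal-counterexample argument that reduces to the $3$-connected case.

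So suppose some non-planar graph has no $K_5$- or $K_{3,3}$-subdivision, and let $G$ be one with the fewest edges. I would first show $G$ is $3$-connected: if $G$ had a cut set of size at most $2$, split $G$ at it into strictly smaller graphs (adjoining a virtual edge between the two cut vertices when the cut has size $2$); by minimality each piece is either planar, in which case the pieces reassemble into a planar embedding of $G$, or contains a Kuratowski subdivision, which then survives in $G$ — either way contradicting the choice of $G$. Next I would invoke the classical fact that a $3$-connected graph with at least five vertices has an edge $e=xy$ whose contraction $G/e$ is again $3$-connected. As $G/e$ has fewer edges, minimality of $G$ forces $G/e$ to be planar or to contain a Kuratowski subdivision $H$; in the latter case, letting $w$ denote the vertex of $G/e$ obtained by identifying $x$ and $y$, I would split on $\deg_H(w)$ and reroute the branch paths of $H$ at $w$ through the edge $xy$ to recover a $K_5$- or $K_{3,3}$-subdivision inside $G$, a contradiction.

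Hence $G/e$ is planar; fix a plane embedding. By the $3$-connectivity of $G/e$, the neighbours of $w$ in $G/e$ lie on a common cycle $C$ (the boundary of the face of $G/e-w$ into which $w$ must be re-inserted), and the heart of the matter is the distribution of $N_G(x)\setminus\{y\}$ and $N_G(y)\setminus\{x\}$ on $C$. If these two sets occupy disjoint arcs of $C$, then $x$ and $y$ can both be drawn inside that face without crossings, embedding $G$ in the plane — impossible. Otherwise, either $x$ and $y$ share three common neighbours $v_1,v_2,v_3$ on $C$, which together with $x$, $y$ and the three arcs of $C$ among $v_1,v_2,v_3$ form a $K_5$-subdivision, or there are vertices $v_1,v_2,v_3,v_4$ occurring in this cyclic order on $C$ with $v_1,v_3\in N_G(x)$ and $v_2,v_4\in N_G(y)$, in which case $\{x,v_1,v_3\}$ against $\{y,v_2,v_4\}$ — joined by the edge $xy$, the four edges at $x$ and $y$, and the four arcs of $C$ between consecutive $v_i$ — form a $K_{3,3}$-subdivision. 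Every branch contradicts the existence of $G$, so the theorem follows.

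The main obstacle is the converse direction, specifically the two case analyses: lifting a Kuratowski subdivision from $G/e$ back to $G$ (the rerouting through $xy$ requires a genuine, if short, split on $\deg_H(w)$, including the degenerate cases where a branch vertex of $H$ meets only one of $x$ and $y$, which is exactly where the $K_5$- versus $K_{3,3}$-dichotomy is forced), and the planar case, where one must enumerate the cyclic patterns in which $N(x)$ and $N(y)$ can interleave on $C$ and, in each, exhibit either the embedding or the subdivision. The reduction to $3$-connectivity and the existence of a contractible edge are by now routine, but transporting Kuratowski subdivisions across the gluing and contraction operations still demands care.
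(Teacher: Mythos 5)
This statement is Kuratowski's theorem, which the paper does not prove at all: it is quoted verbatim with a citation to \cite[Theorem 6.2.2]{west}, and is used later only as a tool. Your outline is the standard proof of that cited result (essentially Thomassen's argument, which is also the one in West): minimal counterexample, reduction to the $3$-connected case via splitting at cuts of size at most two, existence of a contractible edge, lifting a Kuratowski subdivision through the contraction, and the analysis of how $N(x)$ and $N(y)$ sit on the face cycle $C$ of the plane embedding of $G/e$ minus the contracted vertex. As a plan it is correct and complete in structure; the only cosmetic point is that invoking the general genus formula $\gamma(K_n)=\lceil (n-3)(n-4)/12\rceil$ (Ringel--Youngs) to see that $K_5$ and $K_{3,3}$ are non-planar is much heavier than needed, since Euler's formula $e\leq 3v-6$ (resp.\ $e\leq 2v-4$ for triangle-free graphs) settles this directly, and the remaining steps you flag as delicate (the $\deg_H(w)$ case split and the interleaving analysis on $C$) are exactly the places where the full details must be written out.
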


\begin{thm}\label{chit4=ome}
Let $R$ be a ring such that  $R\cong R_1\times \cdots  \times R_n$, where $n$ is a positive integer and $R_i$ is a ring, for every $1\leq i \leq n$. Then the following statements hold.

$(1)$ If  $n\geq 4$, then $AG(R)$ is not planar.

$(2)$ If  $n=3$ and $AG(R)$ is planar, then $R\cong \mathbb{Z}_2\times \mathbb{Z}_2\times\mathbb{Z}_2$.
\end{thm}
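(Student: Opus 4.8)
The plan is to derive both parts from Kuratowski's theorem by exhibiting inside $AG(R)$ a copy of $K_5$ or of a subdivision of $K_{3,3}$. The basic tool is a convenient reformulation of Lemma \ref{lemma2}(2): for $x=(x_1,\dots,x_n)\in R$ write $\mathrm{supp}(x)=\{\,i:x_i\neq 0\,\}$; then any two vertices $x,y\in Z(R)^*$ whose supports are incomparable under inclusion are adjacent in $AG(R)$. Indeed, picking $i\in\mathrm{supp}(x)\setminus\mathrm{supp}(y)$ gives $ann_{R_i}(y_i)=R_i$, so $R_ix_i\cap ann_{R_i}(y_i)=R_ix_i\ni x_i\neq(0)$, and symmetrically at some $j\in\mathrm{supp}(y)\setminus\mathrm{supp}(x)$, whence Lemma \ref{lemma2}(2) yields the edge $x-y$. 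Throughout I take each $R_i$ to be a nonzero ring, as is implicit in the hypothesis.

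For part $(1)$, assume $n\geq 4$. For each two-element subset $\{i,j\}$ of $\{1,2,3,4\}$ let $v_{\{i,j\}}\in R$ be the vector with entry $1$ in coordinates $i,j$ and entry $0$ elsewhere. These six vectors are pairwise distinct, and each has at least two zero coordinates, hence lies in $Z(R)^*$. If $\{i,j\}\neq\{k,l\}$, then, being distinct two-element subsets of $\{1,2,3,4\}$, neither contains the other, so the support criterion makes $v_{\{i,j\}}$ and $v_{\{k,l\}}$ adjacent. Thus these six vertices induce a $K_6$, hence a $K_5$, in $AG(R)$, so $AG(R)$ is not planar.

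For part $(2)$, let $R\cong R_1\times R_2\times R_3$ with $AG(R)$ planar. First I shrink the factors. If some $|R_i|\geq 4$, say $i=1$, choose three distinct nonzero elements $a_1,a_2,a_3\in R_1$ and (since $|R_2\times R_3|\geq 4$) three distinct nonzero elements $(b_1,c_1),(b_2,c_2),(b_3,c_3)$ of $R_2\times R_3$. Then $(a_1,0,0),(a_2,0,0),(a_3,0,0)$ and $(0,b_1,c_1),(0,b_2,c_2),(0,b_3,c_3)$ are six distinct vertices of $AG(R)$; each $(a_s,0,0)$ has support $\{1\}$ and each $(0,b_t,c_t)$ has nonempty support inside $\{2,3\}$, so by the support criterion all nine cross pairs are edges and $AG(R)\supseteq K_{3,3}$, a contradiction. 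Hence $|R_i|\leq 3$ for all $i$, so each $R_i$ is $\mathbb{Z}_2$ or $\mathbb{Z}_3$; in particular each $R_i$ is a field, since a nonzero ring that is not a field has at least four elements.

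The remaining and main difficulty is to exclude a factor isomorphic to $\mathbb{Z}_3$, because then $AG(R)$ contains no $K_{3,3}$ as a subgraph and one must produce a genuine subdivision instead. Suppose $R_1\cong\mathbb{Z}_3$ and consider the eight distinct vertices $w=(0,1,1)$, $s=(0,1,0)$, $t=(0,0,1)$, $p=(1,0,0)$, $q_1=(1,1,0)$, $q_2=(2,1,0)$, $r_1=(1,0,1)$, $r_2=(2,0,1)$ of $AG(R)$ (each has a zero coordinate, hence is a zero-divisor, and only the elements $0,1,2$ of $R_1$ and $0,1$ of $R_2,R_3$ are used, so this works whatever $R_2,R_3\in\{\mathbb{Z}_2,\mathbb{Z}_3\}$ are). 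In each of the pairs $wp$, $wq_1$, $wr_1$, $sp$, $sr_1$, $tp$, $tq_1$, $sr_2$, $r_2q_1$, $tq_2$, $q_2r_1$ the two supports are incomparable, so all eleven are edges of $AG(R)$. Consequently $AG(R)$ contains the subgraph made of the edges $wp,wq_1,wr_1,sp,sr_1,tp,tq_1$ together with the internally disjoint paths $s-r_2-q_1$ and $t-q_2-r_1$; since $r_2$ and $q_2$ are distinct and each occurs only once, this is a subdivision of $K_{3,3}$ with parts $\{w,s,t\}$ and $\{p,q_1,r_1\}$, contradicting planarity. By symmetry no $R_i$ is $\mathbb{Z}_3$, and therefore $R\cong\mathbb{Z}_2\times\mathbb{Z}_2\times\mathbb{Z}_2$, as claimed. (That $AG(\mathbb{Z}_2\times\mathbb{Z}_2\times\mathbb{Z}_2)$ is indeed planar is checked directly, so the conclusion is sharp.)
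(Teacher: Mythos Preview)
Your proof is correct. Part~(1) is essentially identical to the paper's argument: the paper exhibits five of your six vectors $v_{\{i,j\}}$ as a $K_5$, while you observe that all six give a $K_6$; the underlying adjacency reason (incomparable supports) is the same.

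Part~(2) reaches the same conclusion by a slightly different route. The paper does it in a single stroke: assuming only that some $R_i$ has an element $x\notin\{0,1\}$, it writes down the two triples $\{(1,0,1),(x,0,0),(x,0,1)\}$ and $\{(0,1,1),(1,1,0),(0,1,0)\}$ and the path $(x,0,0)-(0,0,1)-(1,1,0)$, obtaining a subdivision of $K_{3,3}$ regardless of whether $|R_i|=3$ or $|R_i|\geq 4$. You instead split into two cases: first dispose of $|R_i|\geq 4$ with a genuine $K_{3,3}$ (using three nonzero elements of $R_i$ against three nonzero elements of the complementary product), and then handle the residual $\mathbb{Z}_3$ factor with an explicit subdivision on eight vertices. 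The paper's construction is more economical, but your two-step organization has the virtue of isolating exactly where a subdivision (rather than an outright $K_{3,3}$) is forced, and your remark that $AG(\mathbb{Z}_2\times\mathbb{Z}_2\times\mathbb{Z}_3)$ contains no $K_{3,3}$ as a subgraph is in fact correct.
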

\begin{proof}
  {$(1)$ We need only to show that  $AG(R)$ is not planar for $n=4$. Since the set $\{(1,1,0,0),(0,1,1,0),(0,0,1,1),(1,0,1,0),(0,1,0,1)\}$ is a complete subgraph of $AG(R)$, $K_5$ is a subgraph of $AG(R)$. The result now follows from Kuratowski's Theorem.

  $(2)$ Let $R\cong R_1\times R_2\times R_3$. Assume to the contrary and without loss of generality,  $R_1\neq\mathbb{Z}_2$. Let $x\in R_1\setminus \{0,1\}$. Then it is not hard to check that the vertices of the set $\{(1,0,1),(x,0,0),(x,0,1)\}$, the vertices of the set $\{(0,1,1),(1,1,0),(0,1,0)\}$ together with the path $(x,0,0)-(0,0,1)-(1,1,0)$ forms a subgraph that  contains a subdivision of $K_{3,3}$, a contradiction. So $R\cong \mathbb{Z}_2\times \mathbb{Z}_2\times\mathbb{Z}_2$. }
\end{proof}
\par In the next theorem, we characterize reduced rings whose annihilator graphs are planar.
 \begin{thm}\label{comp5l}
  Let $R$ be a reduced ring. Then $AG(R)$ is planar if and only if one of the following statements holds.

  $(1)$ $R\cong \mathbb{Z}_2\times \mathbb{Z}_2\times\mathbb{Z}_2$.

  $(2)$ $|\mathrm{Min}(R)|=2$ and one of the minimal prime ideals of $R$ has at most three distinct elements.
\end{thm}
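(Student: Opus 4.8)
The plan is to prove both directions by reducing to the structure theory of reduced rings and then applying Kuratowski's Theorem together with the genus formulas for $K_n$ and $K_{m,n}$. For the ``if'' direction, suppose $(1)$ holds; then $AG(\mathbb{Z}_2\times\mathbb{Z}_2\times\mathbb{Z}_2)$ has exactly six vertices and one checks directly (or cites the computation implicit in Theorem~\ref{chit4=ome}$(2)$) that it is planar. Suppose $(2)$ holds. A reduced ring with exactly two minimal primes $\mathfrak{p}_1,\mathfrak{p}_2$ satisfies $\mathfrak{p}_1\cap\mathfrak{p}_2=\mathrm{Nil}(R)=(0)$, and since one minimal prime, say $\mathfrak{p}_1$, has at most three elements, $R/\mathfrak{p}_2\cong\mathfrak{p}_1$ is finite with $|\mathfrak{p}_1|\le 3$, so $\mathfrak{p}_1$ is a minimal ideal; by Lemma~\ref{lemmae}, $R$ is decomposable, and a short argument shows $R\cong F\times D$ where $F=R/\mathfrak{p}_2$ is a field with $|F|\le 3$ (i.e.\ $F\cong\mathbb{Z}_2$ or $\mathbb{Z}_3$) and $D\cong R/\mathfrak{p}_1$ is an integral domain. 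Here $Z(R)^*=\{(a,0):a\in F^*\}\cup\{(a,b):a\in F, b\in D^*\}$ is the relevant vertex set; using Lemma~\ref{lemma2}$(1)$ and the fact that in a reduced ring $x-y$ is an edge iff neither annihilator contains the other, one sees $AG(R)$ is (for $F\cong\mathbb{Z}_2$) a star-like graph with all vertices of the form $(1,b)$ pairwise nonadjacent and each adjacent only to $(1,0)$ — hence planar — and for $F\cong\mathbb{Z}_3$ a similarly sparse graph that one checks contains no $K_5$ or $K_{3,3}$ subdivision.

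For the ``only if'' direction, assume $AG(R)$ is planar with $R$ reduced and not an integral domain. First, $|\mathrm{Min}(R)|$ is finite: if $R$ had four independent idempotents one would get $R\cong R_1\times\cdots\times R_4$ and $AG(R)$ would contain $K_5$ by Theorem~\ref{chit4=ome}$(1)$. So $|\mathrm{Min}(R)|\in\{2,3\}$ — a reduced ring has $|\mathrm{Min}(R)|\ge 2$ since it is not a domain, and I would argue (via the standard fact that a reduced ring embeds in the product of its quotients by minimal primes, together with an idempotent/planarity argument) that $|\mathrm{Min}(R)|\le 3$. If $|\mathrm{Min}(R)|=3$, I claim $R\cong R_1\times R_2\times R_3$: the three minimal primes give a subdirect decomposition, and planarity forces each factor to be ``small'' enough that the decomposition splits; then Theorem~\ref{chit4=ome}$(2)$ gives $R\cong\mathbb{Z}_2\times\mathbb{Z}_2\times\mathbb{Z}_2$, which is case $(1)$. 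If $|\mathrm{Min}(R)|=2$, write $\mathfrak{p}_1\cap\mathfrak{p}_2=(0)$; I must show one of the $\mathfrak{p}_i$ has at most three elements. Suppose not: then $|R/\mathfrak{p}_1|,|R/\mathfrak{p}_2|$ are both large (at least $4$, counting $0$), so each $\mathfrak{p}_i\supseteq$ a ``large'' set. Pick $a_1,a_2,a_3\in\mathfrak{p}_1^*$ and $b_1,b_2,b_3\in\mathfrak{p}_2^*$; since $\mathfrak{p}_1\not\subseteq\mathfrak{p}_2$ and vice versa, and annihilators behave predictably (any element of $\mathfrak{p}_1^*$ has $\mathfrak{p}_2$-component-type behavior), Lemma~\ref{lemma2}$(1)$ shows the bipartite-like structure between $\{a_i\}$ and $\{b_j\}$ plus suitable connecting vertices contains a $K_{3,3}$ subdivision, contradicting planarity.

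The main obstacle I expect is the $|\mathrm{Min}(R)|=2$ case — specifically, turning ``both minimal primes have at least four elements'' into an explicit $K_{3,3}$ (or $K_5$) subdivision in $AG(R)$. The delicate point is that $\mathfrak{p}_1^*$ and $\mathfrak{p}_2^*$ need not be disjoint from each other in a naive sense and elements may lie in neither principal annihilator cleanly, so I would first reduce to the decomposed form $R\cong A\times B$ with $A,B$ domains (using Lemma~\ref{lemmae} once I know a minimal ideal exists — which requires care when neither minimal prime is finite, so this reduction may only be available after separately handling the finite case) and then exhibit the forbidden subgraph on vertices of the form $(a_i,0)$, $(0,b_j)$, and $(a_i,b_j)$ or $(1,b_j)$, mirroring the explicit construction in the proof of Theorem~\ref{chit4=ome}$(2)$. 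A secondary obstacle is justifying $|\mathrm{Min}(R)|\le 3$ without assuming $R$ is Noetherian; this should follow from the observation that $n$ distinct minimal primes in a reduced ring yield $n$ orthogonal idempotents after localizing appropriately, or more directly from a pigeonhole argument producing $K_5$ inside $AG(R)$ once $n\ge 4$, which is exactly Theorem~\ref{chit4=ome}$(1)$ applied after splitting off the idempotents.
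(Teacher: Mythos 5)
Your proposal correctly identifies the skeleton (decompose, use Theorem \ref{chit4=ome}, handle the two-minimal-primes case), but it leaves a genuine gap exactly where the proof has to do its work: you never establish that planarity forces $R$ to be decomposable. The paper's key step is short but essential: for a reduced ring and any $x\in Z(R)^*$ one has $Rx\cap ann_R(x)=(0)$, and every vertex of $(Rx)^*$ is adjacent to every vertex of $ann_R(x)^*$ (their annihilators are incomparable, by Lemma \ref{lemma2}), so planarity forces one of $Rx$, $ann_R(x)$ to be finite; a nonzero finite ideal contains a minimal ideal, and Lemma \ref{lemmae} then gives $R\cong R_1\times R_2$, and iterating the same argument inside a factor produces the three-factor case settled by Theorem \ref{chit4=ome}$(2)$. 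You explicitly defer this step (``requires care when neither minimal prime is finite''), but without it your treatment of $|\mathrm{Min}(R)|\geq 4$ and $|\mathrm{Min}(R)|=3$ collapses: Theorem \ref{chit4=ome}$(1)$ needs an actual ring-direct-product of four factors, and a reduced ring with several minimal primes need not decompose at all (its spectrum can be connected, e.g. $\mathbb{Z}[x]/(2x)$ has two minimal primes and no nontrivial idempotents), so ``three minimal primes $\Rightarrow$ the subdirect decomposition splits'' is not a consequence of planarity that you have proved.

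Two further points. In the $|\mathrm{Min}(R)|=2$ case your worry is misplaced and the argument is easier than you fear: since $\mathfrak{p}_1\cap\mathfrak{p}_2=\mathrm{Nil}(R)=(0)$, the sets $\mathfrak{p}_1^*$ and $\mathfrak{p}_2^*$ are automatically disjoint, $Z(R)=\mathfrak{p}_1\cup\mathfrak{p}_2$, and for $a\in\mathfrak{p}_1^*$, $b\in\mathfrak{p}_2^*$ one has $(0)\neq ann_R(a)\subseteq\mathfrak{p}_2$ and $(0)\neq ann_R(b)\subseteq\mathfrak{p}_1$, so the two annihilators are incomparable and Lemma \ref{lemma2}$(1)$ gives an honest $K_{3,3}$ (no subdivision, no prior decomposition) as soon as both primes have at least four elements; this is in substance \cite[Theorem 3.7]{Badawi}, which the paper simply cites for both directions of statement $(2)$, whereas you attempt (incompletely) to reprove it. Finally, in your ``if'' direction the vertex set of $AG(F\times D)$ is misdescribed: for $F$ a field and $D$ a domain, $(a,b)$ with $a\neq 0\neq b$ is not a zero-divisor, so $Z(F\times D)^*=\{(a,0):a\in F^*\}\cup\{(0,b):b\in D^*\}$ and $AG(F\times D)=K_{|F^*|,|D^*|}$; the planarity conclusion survives (since $|F^*|\leq 2$), but the ``star-like'' description with vertices $(1,b)$ is incorrect as written.
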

\begin{proof}
{ Suppose that  $AG(R)$ is planar and let $x\in Z(R)^*$. Since $R$ is a reduced ring, we have  $Rx\cap ann_R(x)=(0)$. If $|Rx|=|{ann_R}(x)|=\infty$, then obviously  $AG(R)$ is not planar, a contradiction. If either $|Rx|$ or $|{ann_R}(x)|$ is finite, then $R$ has a minimal ideal and so  by  Lemma \ref{lemmae}, $R$ is decomposable. Assume that $R\cong R_1\times R_2$, where $R_1,R_2$ are two rings. If $|\mathrm{Min}(R)|=2$, then by \cite[Theorem 3.7]{Badawi}, one of the minimal prime ideals of $R$ has at most three distinct elements. If $|\mathrm{Min}(R)|\geq3$, without loss of generality, we may assume that $|\mathrm{Min}(R_2)|\geq2$. Thus $Z(R_2)\neq (0)$. By repeating the above argument we conclude that $R_2$ is decomposable.  Therefore, one may assume that $R\cong R_1\times R_2\times R_3$, where  $R_1,R_2, R_3$ are three rings. By part (2) of Theorem \ref{chit4=ome}, $R\cong \mathbb{Z}_2\times \mathbb{Z}_2\times\mathbb{Z}_2$.

Conversely, if $R\cong \mathbb{Z}_2\times \mathbb{Z}_2\times\mathbb{Z}_2$, then one may easily see that $AG(R)$ is planar. Also, if $|\mathrm{Min}(R)|=2$ and one of the minimal prime ideals of $R$ has at most three distinct elements, then the result follows from \cite[Theorem 3.7]{Badawi}.  }
\end{proof}

\par To characterize non-reduced rings whose annihilator graphs are planar we state the following lemmas.
\begin{lem}\label{lemma43}
  {\rm \cite[Lemma 2.2]{khashyarmanesh}} Let $R$ be a ring and $\mathfrak{m}$ be a maximal ideal in $R$. If $Ann(\mathfrak{m})\neq 0$, then $\mathfrak{m}=Z(Ann(\mathfrak{m}))$.
\end{lem}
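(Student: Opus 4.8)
The plan is to prove the equality $\mathfrak{m}=Z(\mathrm{Ann}(\mathfrak{m}))$ by establishing the two inclusions separately, using only the definition of the module of zero-divisors together with the maximality of $\mathfrak{m}$. By hypothesis we may fix a nonzero element $x\in\mathrm{Ann}(\mathfrak{m})$, and this single element will drive the whole argument.

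First I would check $\mathfrak{m}\subseteq Z(\mathrm{Ann}(\mathfrak{m}))$: if $r\in\mathfrak{m}$, then $rx=0$ because $x$ annihilates every element of $\mathfrak{m}$, and since $x$ is a nonzero element of the $R$-module $\mathrm{Ann}(\mathfrak{m})$, this exhibits $r$ as an element of $Z(\mathrm{Ann}(\mathfrak{m}))$. For the reverse inclusion $Z(\mathrm{Ann}(\mathfrak{m}))\subseteq\mathfrak{m}$, take $r\in Z(\mathrm{Ann}(\mathfrak{m}))$, so that $ry=0$ for some nonzero $y\in\mathrm{Ann}(\mathfrak{m})$, and suppose toward a contradiction that $r\notin\mathfrak{m}$. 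By maximality, $Rr+\mathfrak{m}=R$, so $1=ar+m$ for some $a\in R$ and $m\in\mathfrak{m}$; multiplying by $y$ yields $y=a(ry)+my=0$, since $ry=0$ and $my=0$ (the latter because $y\in\mathrm{Ann}(\mathfrak{m})$ and $m\in\mathfrak{m}$), contradicting $y\neq 0$. Hence $r\in\mathfrak{m}$, and the two inclusions give the claim.

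I do not anticipate any real obstacle, as the argument is entirely elementary; the only step carrying any content is the reverse inclusion, where the maximality of $\mathfrak{m}$ (as opposed to mere primality) is used to write $1$ as a combination of $r$ and an element of $\mathfrak{m}$, forcing the annihilating element $y$ to vanish.
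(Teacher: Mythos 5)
Your proof is correct. Note that the paper itself gives no argument for this lemma --- it is quoted from \cite[Lemma 2.2]{khashyarmanesh} --- so there is nothing to diverge from; your two inclusions (the trivial one via a fixed nonzero $x\in\mathrm{Ann}(\mathfrak{m})$, and the reverse one writing $1=ar+m$ by maximality and killing $y$) constitute the standard proof of that cited result, and every step is valid for a commutative ring with identity as assumed throughout the paper. In effect you have made the paper self-contained on this point rather than taken a different route.
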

\begin{lem}\label{lemma439}
 Let $R$ be a ring and $\mathfrak{m}_1,\mathfrak{m}_2$ be two maximal ideals of $R$ such that $Ann(\mathfrak{m}_1)\neq (0), Ann(\mathfrak{m}_2)\neq (0)$. Then $K_{|{\mathfrak{m}_1\setminus\mathfrak{m}_2}|,{|\mathfrak{m}_2\setminus\mathfrak{m}_1}|}$ is a subgraph of $AG(R)$.
\end{lem}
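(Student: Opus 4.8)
The plan is to prove the statement directly from Lemma \ref{lemma1}: I will show that for every $x\in\mathfrak{m}_1\setminus\mathfrak{m}_2$ and every $y\in\mathfrak{m}_2\setminus\mathfrak{m}_1$ the pair $\{x,y\}$ is an edge of $AG(R)$. Since $\mathfrak{m}_1\setminus\mathfrak{m}_2$ and $\mathfrak{m}_2\setminus\mathfrak{m}_1$ are disjoint, this immediately produces a copy of $K_{|\mathfrak{m}_1\setminus\mathfrak{m}_2|,|\mathfrak{m}_2\setminus\mathfrak{m}_1|}$ as a subgraph. First I would check that these two sets really consist of vertices of $AG(R)$: fix nonzero elements $a\in Ann(\mathfrak{m}_2)$ and $b\in Ann(\mathfrak{m}_1)$; then every element of $\mathfrak{m}_1$ annihilates $b$ and every element of $\mathfrak{m}_2$ annihilates $a$, so $\mathfrak{m}_1\cup\mathfrak{m}_2\subseteq Z(R)$, and since $0\in\mathfrak{m}_1\cap\mathfrak{m}_2$ we get $\mathfrak{m}_1\setminus\mathfrak{m}_2,\ \mathfrak{m}_2\setminus\mathfrak{m}_1\subseteq Z(R)^*$. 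If $\mathfrak{m}_1=\mathfrak{m}_2$ one part is empty and there is nothing to prove, so we may assume $\mathfrak{m}_1\neq\mathfrak{m}_2$.

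The core step is a comaximality trick. Fix $x\in\mathfrak{m}_1\setminus\mathfrak{m}_2$ and $y\in\mathfrak{m}_2\setminus\mathfrak{m}_1$. Since $\mathfrak{m}_2$ is maximal and $x\notin\mathfrak{m}_2$, we have $Rx+\mathfrak{m}_2=R$, so $1=rx+m$ for some $r\in R$ and $m\in\mathfrak{m}_2$. Multiplying by $a$ and using $ma=0$ yields $a=rxa\in Rx$; since $y\in\mathfrak{m}_2$ forces $ay=0$, we conclude $0\neq a\in Rx\cap ann_R(y)$. The symmetric argument, using $y\notin\mathfrak{m}_1$, gives $1=sy+m'$ with $m'\in\mathfrak{m}_1$, hence $b=syb\in Ry$, and since $x\in\mathfrak{m}_1$ gives $bx=0$ we obtain $0\neq b\in Ry\cap ann_R(x)$.

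With $Rx\cap ann_R(y)\neq(0)$ and $Ry\cap ann_R(x)\neq(0)$ in hand, and noting $x\neq y$ because the two parts are disjoint, condition $(2)$ of Lemma \ref{lemma1} shows $x-y$ is an edge of $AG(R)$. Letting $x$ and $y$ range over the two sets produces all cross edges, which is exactly the asserted complete bipartite subgraph. The only real idea in this argument is the comaximality step that pulls a nonzero element of $Ann(\mathfrak{m}_i)$ into the principal ideal generated by an element lying outside $\mathfrak{m}_i$; the remaining points are routine bookkeeping, the one thing to watch being the degenerate case $\mathfrak{m}_1=\mathfrak{m}_2$ (and, trivially, the possibility that one of the parts is empty, where $K_{0,n}$ is a subgraph by convention).
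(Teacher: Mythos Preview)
Your argument is correct. The route differs from the paper's, though the two are close cousins. The paper invokes Lemma~\ref{lemma43} to show that $Ann(\mathfrak{m}_2)\cap ann_R(x)=0$ and $Ann(\mathfrak{m}_1)\cap ann_R(y)=0$, deduces that $ann_R(x)$ and $ann_R(y)$ are incomparable, and then applies the incomparability criterion of Lemma~\ref{lemma2}(1). You instead use comaximality to prove the stronger fact $Ann(\mathfrak{m}_2)\subseteq Rx$ (and symmetrically $Ann(\mathfrak{m}_1)\subseteq Ry$), which feeds directly into condition~(2) of Lemma~\ref{lemma1}. Your approach is a touch more self-contained, since it bypasses Lemma~\ref{lemma43} entirely and also makes explicit why the two parts consist of vertices of $AG(R)$; the paper's approach has the minor advantage of isolating the annihilator-incomparability phenomenon, which is of independent interest. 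Either way the proof is a one-step verification once the right observation is made.
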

\begin{proof}
 { Let $x\in\mathfrak{m}_1\setminus\mathfrak{m}_2$ and $y\in\mathfrak{m}_2\setminus\mathfrak{m}_1 $. We claim that $Ann(\mathfrak{m}_2) \cap ann_R(x)=0$. Assume to the contrary, there exists an element $z\in Ann(\mathfrak{m}_2) \cap ann_R(x)$. Hence $zx=0$. Now, Lemma \ref{lemma43} implies that $x\in \mathfrak{m}_2$, a contradiction. Similarly, $Ann(\mathfrak{m}_1) \cap ann_R(y)=0$. Since  $\mathfrak{m}_1+\mathfrak{m}_2=R$,  $Ann(\mathfrak{m}_1)\neq Ann(\mathfrak{m}_2)$. Hence $Ann(\mathfrak{m}_1)\subseteq ann_R(x)\nsubseteq ann_R(y), \,Ann(\mathfrak{m}_2)\subseteq ann_R(y)\nsubseteq ann_R(x) $ and so by part (2) of Lemma \ref{lemma2}, $x-y$ is an edge of $AG(R)$. }
\end{proof}
\begin{thm}\label{comp95l}
  Let $R$ be a non-reduced ring. Then $AG(R)$ is planar if and only if one of the following statements holds:

 $(1)$ $R$ is ring-isomorphic to either $\mathbb{Z}_2\times\mathbb{Z}_4$ or $\mathbb{Z}_2\times\mathbb{Z}_2[X]/(X^2)$.

 $(2)$ $Ann(Z(R)) $ is a prime ideal of $R$ and $2\leq|\mathrm{Nil}(R)|\leq3$.

 $(3)$ $Z(R)=\mathrm{Nil}(R) $ and $4\leq|\mathrm{Nil}(R)|\leq5$.
\end{thm}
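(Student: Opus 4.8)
The plan is to mirror the strategy used in the reduced case (Theorem \ref{comp5l}): reduce to a finite ring by a minimal-ideal argument, then split according to $|\mathrm{Min}(R)|$. First I would observe that since $R$ is non-reduced, $\mathrm{Nil}(R)^*\neq\emptyset$; pick $0\neq a\in\mathrm{Nil}(R)$. If both $|Ra|$ and $|ann_R(a)|$ were infinite one would try to extract a $K_{3,3}$ or $K_5$ inside $AG(R)$ using Lemma \ref{lemma2}(1) (elements of $Ra^*$ and $ann_R(a)^*$ have incomparable annihilators with $a$, hence are joined to $a$), forcing non-planarity; so at least one of $Ra$, $ann_R(a)$ is finite, $R$ has a minimal ideal, and $R$ is Artinian, in fact $R\cong R_1\times\cdots\times R_n$ with each $R_i$ local Artinian. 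By Theorem \ref{chit4=ome}, $n\le 3$, and if $n=3$ then $R\cong\mathbb Z_2\times\mathbb Z_2\times\mathbb Z_2$, which is reduced — contradiction. So $n\in\{1,2\}$.

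Next I would treat $n=2$: write $R\cong R_1\times R_2$ with $R_i$ local Artinian, $R_2$ say non-reduced (some $R_i$ must be, as $R$ is non-reduced). Here $|\mathrm{Min}(R)|=2$, so one uses the maximal-ideal machinery. Each factor contributes a maximal ideal $\mathfrak m_i$ of $R$ with $Ann(\mathfrak m_i)\neq 0$ (since $R_i$ is Artinian local, hence has nonzero socle), so by Lemma \ref{lemma439}, $K_{|\mathfrak m_1\setminus\mathfrak m_2|,\,|\mathfrak m_2\setminus\mathfrak m_1|}$ sits inside $AG(R)$; planarity of $K_{m,n}$ forces $\min(m,n)\le 2$. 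Translating $|\mathfrak m_i\setminus\mathfrak m_j|$ into $|R_1|,|R_2|$ one gets $|R_i|\le 3$ for one of the factors, hence (being local) $|R_i|=2$ or $|R_i|=3$ or $|R_i|=4$ with the appropriate structure; combined with the non-reduced constraint this should pin the possibilities down to $\mathbb Z_2\times\mathbb Z_4$ and $\mathbb Z_2\times\mathbb Z_2[X]/(X^2)$ — case (1) — after ruling out $\mathbb Z_2\times\mathbb Z_3$ (reduced) and larger factors by exhibiting a forbidden minor. The sufficiency direction for (1) is a direct small computation: $AG(\mathbb Z_2\times\mathbb Z_4)$ and $AG(\mathbb Z_2\times\mathbb Z_2[X]/(X^2)$) each have few vertices and are easily drawn in the plane.

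Finally, $n=1$: $R$ is local Artinian with maximal ideal $\mathfrak m=Z(R)=\mathrm{Nil}(R)$. One subcase is $Ann(Z(R))=Ann(\mathfrak m)$ prime — intuitively $Z(Ann(\mathfrak m))$ small — and one wants $2\le|\mathrm{Nil}(R)|\le 3$ (case (2)); the complementary subcase $Z(R)=\mathrm{Nil}(R)$ with $4\le|\mathrm{Nil}(R)|\le 5$ is case (3). The point is that in a local ring any two elements of $Z(R)^*$ with incomparable annihilators are adjacent, and when $Z(R)=\mathrm{Nil}(R)$ one checks via Lemma \ref{lemma2}(1)/Lemma \ref{lemma1} that large portions of $AG(R)$ are complete; if $|\mathrm{Nil}(R)|\ge 6$ one finds $K_5$ (or a subdivision of $K_{3,3}$) among products/annihilators, killing planarity, while for $|\mathrm{Nil}(R)|\in\{4,5\}$ the graph is small enough to embed. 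The prime-annihilator subcase uses Lemma \ref{lemma43} ($\mathfrak m=Z(Ann(\mathfrak m))$) to control adjacencies and again bounds $|\mathrm{Nil}(R)|$ by $3$. For the converse in (2) and (3) one verifies planarity directly on the (bounded-size) candidate graphs, and checks that the three listed families are genuinely disjoint from one another.

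I expect the main obstacle to be the $n=1$ local case: unlike the reduced situation there is no clean decomposition, so controlling the adjacency structure of $AG(R)$ purely in terms of $|\mathrm{Nil}(R)|$ and the primeness of $Ann(Z(R))$ requires a careful case analysis of the lattice of annihilators in a local Artinian ring, and the exact numerical thresholds ($\le 3$ versus $4$–$5$) must be matched by producing explicit Kuratowski subgraphs just above the threshold and explicit planar embeddings at the threshold. The $n=2$ translation from $|\mathfrak m_i\setminus\mathfrak m_j|$ to factor sizes, and excluding borderline factors like a $4$-element local ring paired with another nontrivial factor, will also need a little care, but it is routine compared with the local analysis.
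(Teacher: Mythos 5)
There is a decisive gap at your very first reduction: you claim that planarity of $AG(R)$ forces $R$ to be Artinian, isomorphic to a finite product of local Artinian rings, and you then run the whole argument on the cases $n=1$ (local Artinian) and $n=2$. This is false, and it silently discards exactly the rings that make case $(2)$ of the statement necessary. The theorem (unlike the toroidal one) is not a finite-ring classification: condition $(2)$, ``$Ann(Z(R))$ prime and $2\leq|\mathrm{Nil}(R)|\leq3$,'' is satisfied by infinite, non-Artinian, indecomposable, non-local rings. For instance $R=\mathbb{Z}_2[x,y]/(x^2,xy)$ has $\mathrm{Nil}(R)=\{0,x\}$, $Ann(Z(R))=(x)$ prime, $ann_R(u)=\{0,x\}$ for every $u\in Z(R)\setminus\mathrm{Nil}(R)$, and $AG(R)$ is an infinite star $K_{1,\infty}$, hence planar; yet $R$ is neither Artinian nor a nontrivial product, so it has no place in your case split. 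The step ``$Ra$ or $ann_R(a)$ finite $\Rightarrow$ minimal ideal $\Rightarrow$ Artinian product of local rings'' is where this breaks: a minimal ideal does not give Artinianness, and the decomposability lemma the paper uses for this purpose (Lemma \ref{lemmae}) is only valid for \emph{reduced} rings, so it cannot be invoked here (indeed the example above has the minimal ideal $(x)$ but is indecomposable). Relatedly, your $K_{3,3}$-extraction from ``both $Ra$ and $ann_R(a)$ infinite'' also needs care in the non-reduced setting, since $Ra\cap ann_R(a)$ need not be zero when $a$ is nilpotent.

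Because of this, your outline never confronts the hard part of the actual proof: the indecomposable case with $Z(R)\neq\mathrm{Nil}(R)$. There the paper bounds $|\mathrm{Nil}(R)|\leq 5$ by Badawi's Theorem 3.10, shows $ann_R(x)\subseteq\mathrm{Nil}(R)$ for $x\in Z(R)\setminus\mathrm{Nil}(R)$, and then proves (splitting on $Ra\subseteq Rb$ versus $Ra\nsubseteq Rb$, using Lemma \ref{lemma1}, Lemma \ref{lemma43} and the maximal-ideal Lemma \ref{lemma439}) that every $b\in\mathrm{Nil}(R)^*$ is adjacent to all of $ann_R(a)^*$; planarity then forces $2\leq|\mathrm{Nil}(R)|\leq3$ and $Ann(Z(R))=\mathrm{Nil}(R)$ prime. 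Your decomposable case ($R\cong R_1\times R_2$ giving $\mathbb{Z}_2\times\mathbb{Z}_4$ or $\mathbb{Z}_2\times\mathbb{Z}_2[X]/(X^2)$) is in the right spirit, as is the local finite analysis, but without an argument covering infinite indecomposable non-reduced rings the proof does not establish the stated equivalence; also note that in the converse direction for $(2)$ one must handle $AG(R)=K_{|\mathrm{Nil}(R)^*|}\vee\overline{K_n}$ with $n$ possibly infinite, which again is invisible if one has assumed $R$ finite.
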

\begin{proof}
{  Suppose that $AG(R)$ is planar. We consider following cases.

\textbf{Case 1.} $R$ is decomposable. Let $R\cong R_1\times R_2$, where $R_1,R_2$ are two rings. One may assume that there exists a non-zero element $a \in \mathrm{Nil}(R_1)$. We show that  $|Z(R_1)|=2$. If $|Z(R_1)| \geq 3$, then by part (2) of Lemma \ref{lemma2}, the vertices contained in the set $\{(1,0),(u,0),(a,0)\}$ and  the vertices contained in the set $\{(0,1),(x,1),(a,1) \}$ form $K_{3,3}$, where $1\neq u \in U(R_1)$ and $x$ is a neighbor of $a$ in $AG(R_1)$, a contradiction (note that $|\mathrm{Nil}(R_1)|\leq |U(R_1)|$). This implies that $|Z(R_1)|=2$. Similarly, if $x\in R_2\setminus \{0,1\}$, then the vertices of the set $\{(1,0),(u,0),(a,0)\}$ and the vertices of the set $\{(0,1),(a,x),(a,1)\}$ form $K_{3,3}$, a contradiction. So $R$ is ring-isomorphic to either $\mathbb{Z}_2\times\mathbb{Z}_4$ or $\mathbb{Z}_2\times\mathbb{Z}_2[X]/(X^2)$.

\textbf{Case 2.} $R$ is indecomposable. By \cite[Theorem 3.10]{Badawi},  $2\leq|\mathrm{Nil}(R)|\leq5$. Then either $2\leq|\mathrm{Nil}(R)|\leq3$ or $4\leq|\mathrm{Nil}(R)|\leq5$. First assume that $Z(R)= \mathrm{Nil}(R)$. If $4\leq|\mathrm{Nil}(R)|\leq5$, then $(3)$ holds. If  $2\leq|\mathrm{Nil}(R)|\leq3$, then $\mathrm{Nil}(R)^2=(0)$ and since  $Z(R)= \mathrm{Nil}(R)$, $Ann(Z(R)) $ is a prime ideal of $R$ and so $(2)$ holds. Now, let $Z(R)\neq \mathrm{Nil}(R)$ and $Ra$ be a minimal ideal, for some $a\in\mathrm{Nil}(R)^*$. Since  $R$ is indecomposable and $Z(R)\neq \mathrm{Nil}(R)$, we conclude that $|ann_R(a)|$ has infinitely many elements. If $xy=0$,  for some $x,y\in Z(R)\setminus\mathrm{Nil}(R)$, then the vertices of the set $\{x,x^2,x^3\}$ and  the vertices of the set $\{y,y^2,y^3\}$ are adjacent, a contradiction (as $R$ is indecomposable). So $ann_R(x)\subseteq\mathrm{Nil}(R)$, for every $x\in Z(R)\setminus\mathrm{Nil}(R) $. Now, let $a\neq b\in\mathrm{Nil}(R)^*$. We claim that $b$ is adjacent to all vertices contained in $ann_R(a)$. To see this, we consider two subcases.

\textbf{Subcase 1.} $Ra\subseteq Rb$. Let $x$ be an arbitrary element of $ ann_R(a)\setminus\mathrm{Nil}(R)$. If $xb=0$, then there is nothing to prove. So let $xb\neq 0$ and $xb^{n-1}\neq0$, $xb^n=0$, for a positive integer $n$. Thus $xb^{n-1}\in Rx\cap ann_R(b)$. Since $Ra\subseteq Rb$, we deduce that $ Rb\cap ann_R(x)\neq (0)$. Now, by Lemma \ref{lemma1}, $x-b$ is an edge of $AG(R)$.

\textbf{Subcase 2.} $Ra\nsubseteq Rb$. Since $Ra$ is a minimal ideal,  $Ra\cap Rb=(0)$. So $Rb$ contains a minimal ideal, say $Rc$, for some  $c\in\mathrm{Nil}(R)$. Thus $ann_R(c)$ is a maximal ideal of $R$. If $ann_R(a)\neq ann_R(c)$, then by  Lemma \ref{lemma439}, we get  a contradiction (as $ann_R(a)$ is a maximal ideal, too). Thus $ann_R(a)= ann_R(c)$. The fact $Rc\subseteq Rb$ together with subcase 1  imply that $b$ is adjacent to all vertices contained in $ann_R(a)$.

So the claim is proved. This together with the planarity of $AG(R)$ imply that $2\leq|\mathrm{Nil}(R)|\leq3$ and hence $\mathrm{Nil}(R)$ is a minimal ideal. Since  $ann_R(x)\subseteq\mathrm{Nil}(R)$ for every $x\in Z(R)\setminus\mathrm{Nil}(R) $, we have $Ann(Z(R))= \mathrm{Nil}(R)$ and $\mathrm{Nil}(R)$ is a prime ideal of $R$.

Conversely, if either $(1$) or $(2)$ is hold, then obviously $AG(R)$ is planar. Moreover if  $Ann(Z(R)) $ is a prime ideal of $R$, then  $Ann(Z(R))= \mathrm{Nil}(R)$ and $ann_R(x)\subseteq\mathrm{Nil}(R)$, for every $x\in Z(R)\setminus\mathrm{Nil}(R) $. Since $\mathrm{Nil}(R)$ is a minimal ideal, $ann_R(x)=\mathrm{Nil}(R)$. Hence  $AG(R)=K_{|\mathrm{Nil}(R)^*|} \vee \overline{K_{n}}$, where $n\in\{0, \infty\}$. Therefore, the condition $2\leq|\mathrm{Nil}(R)|\leq3$ implies that $AG(R)$ is planar.}
\end{proof}

\par We are now in a position to classify all finite rings with planar annihilator graphs.
\begin{cor}
 Let $R$  be a finite ring. If $AG(R)$ is planar, then $R$ is isomorphic to one of the following rings:

$\mathbb{Z}_{4}, \,\,\mathbb{Z}_2[x]/(x^2),\, \,\mathbb{Z}_{9},\,\,\mathbb{Z}_3[x]/(x^2), \,\mathbb{Z}_{8},\,\,\mathbb{Z}_2[x]/(x^3), \,\,\mathbb{Z}_4[x]/(x^2-2,2x),\,\, \mathbb{Z}_2[x,y]/(x^2,xy,y^2),$

$\mathbb{Z}_4[x]/(2x,x^2),\,\,\mathbb{F}_4[x]/(x^2),\mathbb{Z}_4[x]/(x^2+x+1),\, \,\mathbb{Z}_{25},\,\,\mathbb{Z}_5[x]/(x^2),\,\mathbb{Z}_2\times\mathbb{F}_{p^n},\,\mathbb{Z}_3\times\mathbb{F}_{p^n},$

$\,\mathbb{Z}_2\times\mathbb{Z}_4,\, \mathbb{Z}_2\times\mathbb{Z}_2[X]/(X^2),\, \mathbb{Z}_2\times \mathbb{Z}_2\times\mathbb{Z}_2.$
\end{cor}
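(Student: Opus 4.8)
The plan is to combine the structural classifications already established (Theorems~\ref{comp5l} and \ref{comp95l}) with the standard structure theory of finite commutative rings. Recall that every finite commutative ring decomposes uniquely as a finite product of local rings, so the argument splits naturally according to whether $R$ is reduced or non-reduced, and within each case according to the number of local factors.

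First I would handle the reduced case. If $R$ is finite and reduced, then $R$ is a finite product of finite fields. By Theorem~\ref{comp5l}, planarity of $AG(R)$ forces either $R\cong\mathbb{Z}_2\times\mathbb{Z}_2\times\mathbb{Z}_2$, or $|\mathrm{Min}(R)|=2$ (so $R\cong\mathbb{F}_q\times\mathbb{F}_{q'}$ is a product of exactly two fields) with one of the two minimal primes having at most three elements. The minimal prime $0\times\mathbb{F}_{q'}$ has $q'$ elements, so the condition is that one of $q,q'$ is $2$ or $3$; this yields exactly $\mathbb{Z}_2\times\mathbb{F}_{p^n}$ and $\mathbb{Z}_3\times\mathbb{F}_{p^n}$, together with $\mathbb{Z}_2\times\mathbb{Z}_2\times\mathbb{Z}_2$. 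This accounts for the last line of the list.

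Next I would handle the non-reduced case via Theorem~\ref{comp95l}. In case (1) we immediately get $\mathbb{Z}_2\times\mathbb{Z}_4$ and $\mathbb{Z}_2\times\mathbb{Z}_2[X]/(X^2)$. In cases (2) and (3), $R$ is indecomposable, hence (being finite) a finite local ring with maximal ideal $\mathfrak{m}=\mathrm{Nil}(R)$, and the constraint is $2\le|\mathrm{Nil}(R)|\le 5$; since $|R/\mathfrak{m}|$ is a prime power $q$ dividing $|R|=q\cdot|\mathfrak{m}|$, the small values of $|\mathfrak{m}|$ leave only finitely many possibilities for $(q,|\mathfrak{m}|)$, namely $|R|\in\{4,6,8,9,10,12,16,18,25,\dots\}$ with $|\mathfrak{m}|\le 5$. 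For each such order one enumerates the finite local rings (this is where I would invoke the well-known classification of finite local rings of small order, e.g. all local rings of order $p^2$, $p^3$, and $16$), discard those with $|\mathfrak{m}|\notin\{2,\dots,5\}$ or failing the extra hypothesis of (2) (that $\mathrm{Nil}(R)$ be prime, equivalently $Z(R)=\mathrm{Nil}(R)$ with $\mathrm{Nil}(R)^2=0$) or (3) ($Z(R)=\mathrm{Nil}(R)$), and check planarity directly; this produces $\mathbb{Z}_4$, $\mathbb{Z}_2[x]/(x^2)$, $\mathbb{Z}_9$, $\mathbb{Z}_3[x]/(x^2)$, $\mathbb{Z}_{25}$, $\mathbb{Z}_5[x]/(x^2)$ in the $|\mathfrak{m}|\in\{2,3\}$ Gorenstein-like situations, and $\mathbb{Z}_8$, $\mathbb{Z}_2[x]/(x^3)$, $\mathbb{Z}_4[x]/(x^2-2,2x)$, $\mathbb{Z}_2[x,y]/(x^2,xy,y^2)$, $\mathbb{Z}_4[x]/(2x,x^2)$, $\mathbb{F}_4[x]/(x^2)$, $\mathbb{Z}_4[x]/(x^2+x+1)$ in the $|\mathfrak{m}|\in\{4,5\}$ situations (where one checks $AG(R)=K_{|\mathrm{Nil}(R)^*|}$ is planar iff $|\mathrm{Nil}(R)^*|\le 4$).

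The main obstacle will be the bookkeeping in the non-reduced indecomposable case: one must be sure the enumeration of finite local rings $R$ with $4\le|\mathfrak{m}|\le 5$ is complete and that each satisfies $Z(R)=\mathrm{Nil}(R)$ (which for a finite local ring is automatic since $Z(R)=\mathfrak{m}=\mathrm{Nil}(R)$ always holds for Artinian local rings) — so in fact condition (3) and the $2\le|\mathrm{Nil}(R)|\le 3$ part of condition (2) together simply say $2\le|\mathfrak{m}|\le 5$, and the remaining task is a finite, if tedious, classification of local rings of orders $4,8,9,16,25$ (and noting order $p^k$ with $p\ge 3,k\ge 3$ or $p=2,k\ge 5$ gives $|\mathfrak{m}|\ge 8$, hence is excluded). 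Everything else is routine: the reduced case is essentially immediate from Theorem~\ref{comp5l}, and Case~1 of Theorem~\ref{comp95l} already lists the two decomposable non-reduced examples explicitly.

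\begin{proof}
This follows by combining Theorems~\ref{comp5l} and \ref{comp95l} with the structure theorem for finite commutative rings. If $R$ is reduced, it is a finite product of finite fields; Theorem~\ref{comp5l} gives $R\cong\mathbb{Z}_2\times\mathbb{Z}_2\times\mathbb{Z}_2$ or $R\cong\mathbb{F}_2\times\mathbb{F}_{p^n}$ or $R\cong\mathbb{F}_3\times\mathbb{F}_{p^n}$. If $R$ is non-reduced, Theorem~\ref{comp95l}(1) gives $\mathbb{Z}_2\times\mathbb{Z}_4$ and $\mathbb{Z}_2\times\mathbb{Z}_2[X]/(X^2)$, while in cases (2) and (3) the ring $R$ is a finite local ring with $2\le|\mathrm{Nil}(R)|\le 5$; enumerating the finite local rings of orders $4,8,9,16,25$ subject to these constraints yields exactly the remaining rings in the list.
\end{proof}
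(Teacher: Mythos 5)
Your proposal is correct and takes essentially the same route as the paper: the paper's own proof is precisely the combination of Theorems \ref{comp5l} and \ref{comp95l} with the classification of small finite local rings from Redmond's tables, which is what you carry out in expanded detail (reduced case from Theorem \ref{comp5l}, the two decomposable non-reduced rings from case (1) of Theorem \ref{comp95l}, and the local rings with $2\le|\mathfrak{m}|\le 5$ from cases (2)--(3)). The only blemishes are cosmetic bookkeeping slips that do not affect the argument: $\mathbb{Z}_{25}$ and $\mathbb{Z}_5[x]/(x^2)$ have $|\mathfrak{m}|=5$ and thus belong to the case-(3) bucket rather than $|\mathfrak{m}|\in\{2,3\}$, and a finite local ring cannot have the non-prime-power orders $6,10,12,18$ you momentarily list before correctly restricting to orders $4,8,9,16,25$.
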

\begin{proof}
{The proof follows from \cite[Section 5]{Redmond}, Theorems \ref{comp5l} and \ref{comp95l}. }
\end{proof}

\par The last result in this section states that the genus of the annihilator graph associated with an infinite ring is either zero or infinite (see the next theorem).
\begin{thm}\label{lemma433}
 Let $R$ be an infinite ring. Then either $\gamma(AG(R))=0$ or $\gamma(AG(R))=\infty$.
\end{thm}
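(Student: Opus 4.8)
The plan is to prove the sharper statement that $\gamma(AG(R))<\infty$ already forces $AG(R)$ to be planar. The engine is the genus formulas recalled in the introduction: $\gamma(K_n)\to\infty$ and $\gamma(K_{3,n})\to\infty$, so a graph of finite genus can contain neither arbitrarily large complete subgraphs nor $K_{3,n}$ for arbitrarily large $n$. Hence it suffices to revisit the proofs of Theorems \ref{chit4=ome}, \ref{comp5l} and \ref{comp95l} and check that, when $R$ is \emph{infinite}, each contradiction-with-planarity there is in fact the appearance of an unbounded complete or complete bipartite subgraph of $AG(R)$.

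For $R$ reduced: pick $x\in Z(R)^*$ (possible since $R$ is not a domain). Then $Rx\cap ann_R(x)=(0)$, and by Lemma \ref{lemma2}(1) every nonzero element of $Rx$ is adjacent to every nonzero element of $ann_R(x)$; finiteness of the genus thus forces one of $Rx,ann_R(x)$ to be finite, so $R$ has a minimal ideal and, by Lemma \ref{lemmae}, is decomposable. A three–factor decomposition $A\times B\times C$ with, say, $A$ infinite produces $K_{|A|,|A|}$ via the vertex sets $\{(r,1,0):r\in A\}$ and $\{(r,0,1):r\in A\}$ (Lemma \ref{lemma2}(2)), so $R\cong R_1\times R_2$ with both factors indecomposable; $R$ infinite forbids both factors finite, and $x=(1,0)$ forbids both infinite, so one factor is a finite indecomposable reduced ring, hence $\mathbb{F}_q$. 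If $q\geq4$, then $x=(1,0)$ gives $K_{q-1,n}\supseteq K_{3,n}$ for all $n$; and if the infinite factor $R_1$ is not a domain then (Lemma \ref{lemmae}) $R_1a$ and $ann_{R_1}(a)$ are infinite for some zero-divisor $a$, yielding $K_{n,n}$ for all $n$ inside $AG(R_1)$, which is a subgraph of $AG(R)$ by Lemma \ref{lemma2}(2). Hence $R\cong D\times\mathbb{F}_q$ with $D$ a domain and $q\leq3$, which is planar by Theorem \ref{comp5l}(2).

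For $R$ non-reduced the key preliminary is that $\mathrm{Nil}(R)^*$ induces a complete subgraph of $AG(R)$: for distinct $u,v\in\mathrm{Nil}(R)^*$, if $uv=0$ then $1\in ann_R(uv)\setminus(ann_R(u)\cup ann_R(v))$; and if $uv\neq0$, assuming $ann_R(uv)=ann_R(u)\cup ann_R(v)$ forces (a union of ideals being an ideal is a nesting, say $ann_R(v)\subseteq ann_R(u)=ann_R(uv)$), together with minimality of the nilpotency index of $v$, a descending induction on the exponent that ends in $uv=0$, a contradiction. Thus $\mathrm{Nil}(R)$ is finite. If $R$ were decomposable, the clique $\{(a,c):c\in R_2\}$ for $a\in\mathrm{Nil}(R_1)^*$ (Lemma \ref{lemma2}(2)) forces $R_2$ finite, hence reduced, hence (killing extra factors as above) a finite field, so $R\cong R_1\times\mathbb{F}_q$ with $R_1$ infinite non-reduced; choosing $a\in\mathrm{Nil}(R_1)^*$ with $a^2=0$, the isomorphism $R_1a\cong R_1/ann_{R_1}(a)$ and finiteness of $\mathrm{Nil}(R_1)\supseteq R_1a$ make $ann_{R_1}(a)$ infinite, and then $\{(a,0),(a,1),(0,1)\}$ is adjacent to every element of $\{(x,0):x\in ann_{R_1}(a)^*\}$, giving $K_{3,n}$ for all $n$. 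So $R$ is indecomposable, and the proof of Theorem \ref{comp95l}, Case 2 applies with ``planar'' replaced by ``of finite genus'': a minimal ideal $Ra\subseteq\mathrm{Nil}(R)$ has maximal annihilator $\mathfrak m$, infinite since $Ra\cong R/\mathfrak m$ is finite, so $Z(R)\neq\mathrm{Nil}(R)$; there is no $xy=0$ with $x,y\in Z(R)\setminus\mathrm{Nil}(R)$ (otherwise, $R$ being indecomposable, $\{x^i\}$ and $\{y^i\}$ are infinite and mutually adjacent, giving $K_{n,n}$ for all $n$), whence $ann_R(x)\subseteq\mathrm{Nil}(R)$ off the nilradical; the subcase analysis of Theorem \ref{comp95l} (via Lemmas \ref{lemma43} and \ref{lemma439}) then shows each $b\in\mathrm{Nil}(R)^*$ is adjacent to all of $\mathfrak m^*$, so $|\mathrm{Nil}(R)|\leq3$ lest $K_{3,n}\subseteq AG(R)$ for all $n$, and finally $Ann(Z(R))=\mathrm{Nil}(R)$ is prime, placing $R$ in case (2) of Theorem \ref{comp95l}.

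The main obstacle is the non-reduced indecomposable case: one must verify that the relevant annihilators are genuinely infinite — which is exactly where the clique structure on $\mathrm{Nil}(R)$ together with $Ra\cong R/ann_R(a)$ is used — and that each ``contradiction'' of the cited proof yields an unbounded subgraph rather than merely a nonplanar one. The one step needing a new observation is replacing ``$K_{3,3}$'' by ``$K_{n,n}$ for all $n$'' when $xy=0$ with $x,y$ non-nilpotent, which works because in an indecomposable ring a non-nilpotent non-unit has infinitely many distinct powers (an idempotent power would have to be $0$ or $1$).
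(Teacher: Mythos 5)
Your proposal is correct in substance, but it is organized quite differently from the paper's proof. The paper splits on decomposability: for indecomposable $R$ it shows (using Badawi's Theorem 3.10, the bound $|ann_R(x)|\leq 3$ forced by the infinite bipartite graph between $(Rx)^*$ and $ann_R(x)^*$, and the two subcases with $K_{3,\infty}$ and Lemma \ref{lemma439}) that all non-nilpotent zero-divisors share a single annihilator, so $AG(R)=K_{|ann_R(x)^*|}\vee\overline{K}_\infty$ is planar; the decomposable case is then dismissed in two lines by reducing to one factor of size at most $3$. You instead split on reducedness and re-run the planarity classifications (Theorems \ref{chit4=ome}, \ref{comp5l}, \ref{comp95l}), upgrading each ``non-planar'' contradiction to an unbounded $K_n$, $K_{3,n}$ or $K_{n,n}$; your two genuinely new ingredients — the clique on $\mathrm{Nil}(R)^*$ forcing $\mathrm{Nil}(R)$ finite, and the observation that a non-nilpotent non-unit of an indecomposable ring has infinitely many distinct powers (no nontrivial idempotent) — replace the paper's appeal to Badawi's Theorem 3.10 and its unexplained claim $|Rx|=\infty$. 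What your route buys is a much more honest treatment of the decomposable case (you actually prove that an infinite decomposable non-reduced ring has infinite genus, and that an infinite decomposable reduced ring of finite genus is $D\times\mathbb{F}_q$ with $q\leq 3$, where the paper's Case 2 is little more than an assertion); what the paper's route buys is a shorter, self-contained join-structure argument in the indecomposable case that does not pass back through Theorem \ref{comp95l}. Two small patches you should make explicit: ``$R_2$ finite, hence reduced'' is a non sequitur as written — reducedness of $R_2$ follows from the symmetric nil-clique $\{(c,b):c\in R_1\}$ with $b\in\mathrm{Nil}(R_2)^*$, which would be infinite, not from finiteness; and in the step invoking Lemma \ref{lemma439} you must note that both $\mathfrak{m}_1\setminus\mathfrak{m}_2$ and $\mathfrak{m}_2\setminus\mathfrak{m}_1$ are infinite, which holds because each $\mathfrak{m}_i=ann_R$ of a generator of a minimal ideal inside the finite set $\mathrm{Nil}(R)$ has finite index in the infinite ring $R$, and $\mathfrak{m}_1\cap\mathfrak{m}_2$ has index at least $2$ in each. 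With these one-line additions your argument is complete.
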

\begin{proof}
{Suppose to the contrary that $0<\gamma(AG(R))<\infty$. We consider two following cases.

\textbf{Case 1.} $R$ is indecomposable. The equality $|R|=\infty$ together with \cite[Theorem 3.10]{Badawi} imply that $Z(R)\neq\mathrm{Nil}(R)$.  Let $x\in Z(R)\setminus\mathrm{Nil}(R)$. Since $R$ is indecomposable, $|Rx|=\infty$, and so $\gamma(AG(R))<\infty$ shows that $|ann_R(x)|\leq 3$. So the indecomposability of $R$ implies that $\mathrm{Nil}(R)\neq (0)$. We claim that  for every $y\in Z(R)\setminus\mathrm{Nil}(R)$, $ann_R(x)=ann_R(y)$. If $ann_R(x)\neq ann_R(y)$ for some  $y\in Z(R)\setminus\mathrm{Nil}(R)$, then since $ann_R(x)$ and $ ann_R(y)$ are  two minimal ideals, $ann_R(x)\cap ann_R(y)=(0)$. Now,  let $ 0\neq a\in ann_R(x)$ and $ 0\neq b\in ann_R(y)$. Since $Ra$ and $Rb$ are two minimal ideals, both $ann_R(a)$ and $ann_R(b)$ are maximal ideals. So we put  $ann_R(a)=\mathfrak{m}_1$ and $ann_R(b)=\mathfrak{m}_2$. We consider two subcases.

\textbf{Subcase 1.} $|\mathfrak{m}_1\cap\mathfrak{m}_2|=\infty$. So  the vertices contained in the set $\{a,b, a+b \}$ and  the vertices contained in the set $\mathfrak{m}_1^*\cap\mathfrak{m}_2^*\setminus \{a,b, a+b \}$ form $K_{3,\infty}$, a contradiction.

\textbf{Subcase 2.} $|\mathfrak{m}_1\cap\mathfrak{m}_2|<\infty$. The indecomposability of $R$ implies that $\mathfrak{m}_1\neq \mathfrak{m}_2$,  $|\mathfrak{m}_1\setminus \mathfrak{m}_2|=\infty$ and  $|\mathfrak{m}_2\setminus \mathfrak{m}_1|=\infty$.
Thus Lemma \ref{lemma43} contradicts $\gamma(AG(R))<\infty$. Hence  for every $y\in Z(R)\setminus\mathrm{Nil}(R)$, $ann_R(x)=ann_R(y)$ and so the claim is proved. This implies that $AG(R)=K_{|ann_R(x)^*|}\vee\overline{K}_\infty$ and so $\gamma(AG(R))=0$, a contradiction.

\textbf{Case 2.} $R$ is decomposable.
 Let $R\cong R_1\times R_2$. Since $0<\gamma(AG(R))<\infty$, we may assume that $|R_1|\leq 3$, $|R_2|=\infty$. Therefore,  $\gamma(AG(R))=0$, a contradiction.~}
\end{proof}
{\section{Toroidal Annihilator  Graphs}\vspace{-2mm}
In this section all rings with toroidal annihilator graphs are classified. We first study annihilator graphs associated with reduced rings.
 \begin{thm}\label{comp5l1}
  Let $R$ be a reduced ring. If  $AG(R)$ is toroidal, then $R\cong R_1\times \cdots \times R_n$, where $2\leq n\leq 3$. Moreover, one of the following statements hold.

  $(1)$ If $n=3$, then  $R\cong \mathbb{Z}_2\times \mathbb{Z}_2\times\mathbb{Z}_3$. Also, $AG(\mathbb{Z}_2\times \mathbb{Z}_2 \times \mathbb{Z}_3)$ is a toroidal graph.

  $(2)$ If $n=2$, then $R$ is one of the following rings:

  $$\mathbb{F}_7 \times \mathbb{F}_4,\,\, \mathbb{F}_5\times \mathbb{F}_5,\,\, \mathbb{F}_5\times \mathbb{F}_4, \,\,\mathbb{F}_4 \times \mathbb{F}_4.$$
\end{thm}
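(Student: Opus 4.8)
The plan is to combine the structural results above with two graph-theoretic inputs: the exact genus formulas for $K_n$ and $K_{m,n}$ recalled in the introduction, and the elementary fact that a toroidal graph cannot contain two vertex-disjoint connected non-planar subgraphs. (For the latter: if a connected non-planar graph $H_1$ is embedded in the torus, that embedding is cellular, since otherwise $H_1$ would embed in the sphere; hence each face of $H_1$ is an open disc, and any connected subgraph $H_2$ disjoint from $H_1$ lies inside one such face and is therefore planar.)

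First I would reduce to a product of finite fields. Since $\gamma(AG(R))=1\notin\{0,\infty\}$, Theorem \ref{lemma433} forces $R$ to be finite, so $R\cong\mathbb{F}_{q_1}\times\cdots\times\mathbb{F}_{q_n}$ with $n\ge 2$ (as $R$ is not a domain). To obtain $n\le 3$ it suffices to handle $n=4$. Among the vectors of $\prod_{i=1}^{4}\mathbb{F}_{q_i}$ whose entries are all $0$ or $1$, at least one being $0$ and at least one being nonzero, the annihilator of a vector depends only on its support, and annihilator-containment is exactly support-containment; so by part $(1)$ of Lemma \ref{lemma2} the subgraph of $AG(\prod\mathbb{F}_{q_i})$ induced on these $14$ vectors is isomorphic to $AG(\mathbb{Z}_2\times\mathbb{Z}_2\times\mathbb{Z}_2\times\mathbb{Z}_2)$. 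A direct count shows $AG(\mathbb{Z}_2^{4})$ has $14$ vertices and $55$ edges; as $55>3\cdot 14$, it violates the bound $|E|\le 3|V|$ valid for every graph embeddable in the torus, so $\gamma(AG(\mathbb{Z}_2^{4}))\ge 2$ and hence $\gamma(AG(R))\ge 2$, a contradiction. Thus $2\le n\le 3$.

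For $n=3$, write $R\cong\mathbb{F}_{q_1}\times\mathbb{F}_{q_2}\times\mathbb{F}_{q_3}$ with $q_1\ge q_2\ge q_3\ge 2$. Using the same annihilator-order description of adjacency, one checks that $AG(R)$ is obtained from the triangular prism (two disjoint triangles together with a perfect matching between them) by replacing each of its six vertices with an independent set --- of size $q_i-1$ for the three vertices indexed by a single coordinate and of size $(q_j-1)(q_k-1)$ for the three vertices indexed by a pair of coordinates --- and each edge with a complete bipartite graph; consequently $AG(\prod\mathbb{F}_{q_i'})$ is an induced subgraph of $AG(\prod\mathbb{F}_{q_i})$ whenever $q_i'\le q_i$ for all $i$. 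If all $q_i=2$, then $AG(R)$ is planar by Theorem \ref{comp5l}, hence not toroidal. If $R\cong\mathbb{Z}_2\times\mathbb{Z}_2\times\mathbb{Z}_3$, I would exhibit a copy of $K_{3,3}$ inside $AG(R)$ (so $AG(R)$ is non-planar) together with an explicit embedding of $AG(R)$ in the torus, giving $\gamma(AG(R))=1$. Every remaining triple dominates, after sorting, either $(4,2,2)$ or $(3,3,2)$, so by the monotonicity above it suffices to rule out $R\cong\mathbb{Z}_2\times\mathbb{Z}_2\times\mathbb{F}_4$ and $R\cong\mathbb{Z}_2\times\mathbb{Z}_3\times\mathbb{Z}_3$. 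In $\mathbb{Z}_2\times\mathbb{Z}_3\times\mathbb{Z}_3$ the independent set indexed by the pair of $\mathbb{Z}_3$-coordinates has size $4$ and is completely joined to the union of the three independent sets indexed by the types incomparable with it, of total size $5$, so $K_{4,5}\subseteq AG(R)$ and $\gamma(K_{4,5})=\lceil 6/4\rceil=2$. In $\mathbb{Z}_2\times\mathbb{Z}_2\times\mathbb{F}_4$ the edge-count bound yields only $\gamma\ge 1$ and no complete bipartite subgraph has genus exceeding $1$, so here I would invoke the disjoint-subgraph fact: the size-$3$ independent set indexed by the $\mathbb{F}_4$-coordinate together with the three single vertices to which it is joined spans a $K_{3,3}$, the two size-$3$ independent sets indexed by the two coordinate-pairs containing the $\mathbb{F}_4$-coordinate span another $K_{3,3}$, and these two copies are vertex-disjoint; hence $\gamma(AG(R))\ge 2$. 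So for $n=3$ the only toroidal ring is $\mathbb{Z}_2\times\mathbb{Z}_2\times\mathbb{Z}_3$.

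For $n=2$, write $R\cong\mathbb{F}_{q_1}\times\mathbb{F}_{q_2}$ with $q_1\ge q_2$. By part $(1)$ of Lemma \ref{lemma2}, two zero-divisors of $R$ supported on different coordinates are adjacent and two supported on the same coordinate are not, so $AG(R)\cong K_{q_1-1,\,q_2-1}$. When $q_2\le 3$ this is a star or a graph of the form $K_{m,2}$, hence planar; otherwise $\gamma(AG(R))=\lceil(q_1-3)(q_2-3)/4\rceil$, and setting this equal to $1$ and running through the prime powers $q_2=4$ (which forces $q_1\in\{4,5,7\}$) and $q_2=5$ (which forces $q_1=5$), with every $q_2\ge 7$ giving genus at least $2$, gives exactly $\mathbb{F}_7\times\mathbb{F}_4$, $\mathbb{F}_5\times\mathbb{F}_5$, $\mathbb{F}_5\times\mathbb{F}_4$ and $\mathbb{F}_4\times\mathbb{F}_4$. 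The step I expect to be the main obstacle is the ring $\mathbb{Z}_2\times\mathbb{Z}_2\times\mathbb{F}_4$ in the case $n=3$: every local certificate fails there, since $AG(R)$ has clique number $3$, its largest complete bipartite subgraph is $K_{3,5}$, and edge-counting gives only $\gamma\ge 1$; one is therefore forced to use the global statement about two vertex-disjoint non-planar subgraphs, which must be proved (or located in the literature) separately.
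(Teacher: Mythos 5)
Your proposal is essentially correct and reaches the same list of rings, but it travels a noticeably different road from the paper. The paper's proof works entirely with subdivisions of $K_{5,4}$ (genus $2$): one explicit subdivision kills $n\geq 4$, one kills $|R_2|>2$, and one kills $|R_3|>3$ in the case $n=3$ (the last one, built on the vertex sets $\{(0,1,x),(0,1,y),(0,1,1),(0,1,0)\}$ and $\{(1,1,0),(1,0,1),(1,0,x),(1,0,y),(1,0,0)\}$ with the path $(0,1,0)-(0,0,1)-(1,1,0)$, already handles $\mathbb{Z}_2\times\mathbb{Z}_2\times\mathbb{F}_4$); the case $n=2$ is then delegated to Badawi's description of $AG$ for reduced rings with two minimal primes together with Wickham's genus-one classification, and toroidality of $\mathbb{Z}_2\times\mathbb{Z}_2\times\mathbb{Z}_3$ is shown by a figure, exactly as you defer it. You instead use an Euler-formula edge count ($55>3\cdot 14$) for $n=4$, a blown-up-prism description with monotonicity in the $q_i$ for $n=3$, a $K_{4,5}$ subgraph for $(3,3,2)$, and, for $(4,2,2)$, two vertex-disjoint copies of $K_{3,3}$ plus the additivity-type fact that a toroidal graph cannot contain two disjoint non-planar connected subgraphs; finally, for $n=2$ you compute $AG(\mathbb{F}_{q_1}\times\mathbb{F}_{q_2})\cong K_{q_1-1,q_2-1}$ directly and apply the genus formula, which is cleaner and more self-contained than the paper's citations. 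All of this is sound (your sketch of the disjoint-subgraph fact via cellularity of minimal-genus embeddings is correct, or one can cite the Battle--Harary--Kodama--Youngs additivity theorem), but note that your claim that ``every local certificate fails'' for $\mathbb{Z}_2\times\mathbb{Z}_2\times\mathbb{F}_4$ is not quite right: you only ruled out complete bipartite \emph{subgraphs}, whereas the paper exhibits a \emph{subdivision} of $K_{4,5}$ in that ring, so the heavier topological lemma is avoidable. The remaining items to carry out are routine: the explicit toroidal embedding of $AG(\mathbb{Z}_2\times\mathbb{Z}_2\times\mathbb{Z}_3)$ (the paper simply draws it) and the $K_{3,3}$ showing its non-planarity, which indeed exists, e.g.\ on $\{(0,1,0),(0,1,1),(0,1,2)\}$ and $\{(1,0,0),(1,0,1),(1,0,2)\}$.
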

\begin{proof}
{ First we show that $R$ is decomposable.  By hypothesis, $AG(R)$ is a toroidal graph and so it follows from Theorem  \ref{lemma433}  that $R$ is finite. Since $R$ is a reduced ring, we deduce that $R\cong R_1\times \cdots \times R_n$, where $2\leq n$. If $ n\geq 4$, then we prove  that $AG(R)$ is not a toroidal graph. To see this, we need only to check the case $n=4$. If $n=4$, then it is not hard to check that the vertices of the set $\{(1,1,1,0),(1,1,0,0),(1,0,1,0),(0,1,1,0),(1,0,0,0) \}$ and the vertices contained in the set $\{(1,0,0,1),(0,1,0,1),(0,0,1,1),(0,0,0,1) \}$ together with the path $(1,0,0,0)-(0,1,0,0)-(1,0,0,1)$ form a subgraph which  contains a subdivision of $K_{5,4}$, a contradiction. So $n\leq 3$.

$(1)$ Let $R \cong R_1\times R_2 \times R_3$. The ring $R_i$ is indecomposable and finite, for every $1\leq i \leq 3$, so $R_i$ is a field  for every $1 \leq i \leq 3$. If $R_1\cong R_2 \cong  R_3 \cong \mathbb{Z}_2$, then by Theorem \ref{chit4=ome},  $AG(R)$ is a planar graph, a contradiction. So, with no loss of generality, we can suppose that $|R_3|>2$. We show that $R_1 \cong R_2 \cong \mathbb{Z}_2$.   If $|R_2|>2$, then the vertices of the set $\{(1,0,0),(1,0,1),(1,0,y),(1,1,0),(1,x,0) \}$ and the vertices of the set $\{(0,1,1),(0,1,y),(0,x,y),(0,x,1) \}$  form a subgraph which  contains a subdivision of $K_{5,4}$, where $x \in R_2\setminus \{0,1 \}$ and $y \in R_3 \setminus \{0,1\}$, a contradiction. Thus $R_2 \cong \mathbb{Z}_2$. Similarly, $R_1 \cong \mathbb{Z}_2$. We have only to prove that $R_3 \cong \mathbb{Z}_3$ and $AG(\mathbb{Z}_2\times \mathbb{Z}_2 \times \mathbb{Z}_3)$ is a toroidal graph. If $x,y \in R_3 \setminus \{0,1\}$, then the vertices of the set $\{(0,1,x),(0,1,y),(0,1,1),(0,1,0) \}$ and the vertices contained in the set $\{(1,1,0),(1,0,1),(1,0,x),(1,0,y),(1,0,0) \}$ together with the path $(0,1,0)-(0,0,1)-(1,1,0)$  form a subgraph which  contains a subdivision of $K_{5,4}$, a contradiction. Hence $R_3 \cong \mathbb{Z}_3$ and  $R \cong \mathbb{Z}_2\times \mathbb{Z}_2 \times \mathbb{Z}_3$. The following Figure shows that $AG(\mathbb{Z}_2\times \mathbb{Z}_2 \times \mathbb{Z}_3)$ can, indeed, be drawn without crossing itself on a torus.  Hence  $AG(\mathbb{Z}_2\times \mathbb{Z}_2 \times \mathbb{Z}_3)$ is a toroidal graph.

\begin{center}
\begin{figure}[h]
\hspace{4.2cm}
\unitlength=.6mm
\begin{picture}(100,90)(-70,-20)
\put (-70,-10){\framebox(120,80)}
\put (-15,50){\circle*{1.5}}
\put (-13,52){\tiny{(1,1,0)}}
\put (-15,50){\line (1,0){65}}
\put (-15,50){\line (2,-1){30}}
\put (-25,35){\circle*{1.5}}
\put (-23,36){\tiny{(1,0,1)}}
\put (15,35){\line (1,0){35}}
\put (-5,35){\circle*{1.5}}
\put (-4,35){\tiny{(0,1,1)}}
\put (15,25){\line (3,-1){35}}
\put (-5,35){\line (-2,3){10}}
\put (-5,35){\line (-2,0){20}}
\put (-5,35){\line (2,-1){20}}
\put (-5,35){\line (0,-1){20}}
\put (-25,35){\line (2,3){10}}
\put (-25,35){\line (-3,5){21}}
\put (-25,15){\circle*{1.5}}
\put (-31,17){\tiny{(0,1,2)}}
\put (-25,15){\line (-4,-1){45}}
\put (-5,15){\circle*{1.5}}
\put (-4,15){\tiny{(1,0,2)}}
\put (-5,15){\line (-2,0){20}}
\put (-35,35){\circle*{1.5}}
\put (-39,37){\tiny{(0,1,0)}}
\put (-35,35){\line (3,-2){30}}
\put (-35,35){\line (-5,-3){35}}
\put (-35,35){\line (-5,2){35}}
\put (-35,35){\line (-1,0){35}}
\put (-35,35){\line (1,0){15}}
\put (15,25){\circle*{1.5}}
\put (17,26){\tiny{(1,0,0)}}
\put (15,25){\line (5,-3){35}}
\put (15,35){\circle*{1.5}}
\put (17,37){\tiny{(0,0,2)}}
\put (15,35){\line (0,-1){10}}
\put (15,50){\circle*{1.5}}
\put (17,52){\tiny{(0,0,1)}}
\put (-25,15){\line (-4,-5){20}}
\put (-15,50){\line (-1,2){10}}
\put (-15,50){\line (1,2){10}}
\put (-25,15){\line (0,-1){25}}
\put (-5,15){\line (0,-1){25}}
\put (15,50){\line (0,1){20}}
\put (15,25){\line (0,-1){35}}
\end{picture}
\caption{$ \text{The} \,\, \text{annihilator} \,\, \text{graph }\, \text{of}\,\, \mathbb{Z}_2\times  \mathbb{Z}_2\times \mathbb{Z}_3 \,\, \text{on} \text{ the} \text{ torus.} $}
\end{figure}
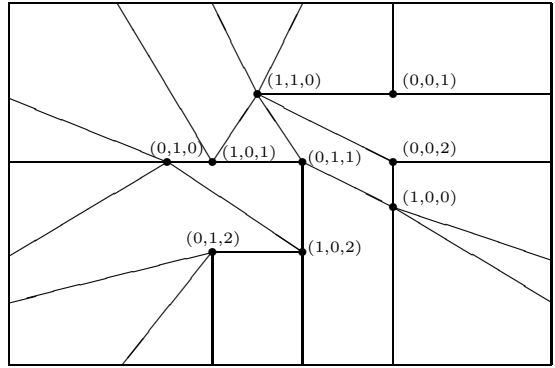

\end{center}
 $(3)$ If $n=2$, then the result follows from  \cite[Theorem 3.6]{Badawi}, and part $(3)$ of  \cite[Theorem 3.1]{wickham}.  }
\end{proof}

\par To complete our classification, we state the following remark and lemma.
 \begin{remark}\label{remar}
  It is not hard to see that, if  $(R,\mathfrak{m}) $  is a finite local  ring, then there exists a prime integer $p$ and positive integers $t, l, k$ such that \rm{char}$(R)=p^t$,  $|\mathfrak{m}|=p^l$, $|R|=p^k$ and \rm{ char}$(R/ \mathfrak{m})=p$.
\end{remark}

\begin{lem}\label{list ring}
 Let $(R,\mathfrak{m})$  be a finite local ring. If $|\mathfrak{m}|\in \{7,8\}$, then $R$ is isomorphic to one of the following 22 rings:

$\mathbb{Z}_{49}, \,\,\mathbb{Z}_7[x]/(x^2),\, \,\mathbb{Z}_{16},\,\,\mathbb{Z}_2[x]/(x^4), \,\,\mathbb{Z}_4[x]/(x^2+2), \,\,\mathbb{Z}_4[x]/(x^2+3x),\,\,\mathbb{Z}_4[x]/(x^3-2,2x^2,2x),\,\,$

$\mathbb{Z}_2[x,y]/(x^3,xy,y^2),\,\mathbb{Z}_8[x]/(2x,x^2),\,\mathbb{Z}_4[x]/(x^3,2x^2,2x),\,\mathbb{Z}_4[x]/(x^2+2x),\,\mathbb{Z}_8[x]/(2x,x^2+4),\,$
$\mathbb{Z}_2[x,y]/(x^2,y^2-xy),\,\,\mathbb{Z}_4[x,y]/(x^2,y^2-xy,xy-2,2x,2y),\,\, \mathbb{Z}_4[x,y]/(x^3,y^2,xy-2,2x,2y),\,\, $

$ \mathbb{Z}_2[x,y]/(x^2,y^2), \,\,\mathbb{Z}_4[x]/(x^2), \,\mathbb{Z}_4[x]/(x^3-x^2-2,2x^2,2x),\,\,\mathbb{Z}_2[x,y,z]/(x,y,z)^2, \,\,\mathbb{F}_8[x]/(x^2),\,\,$

$\mathbb{Z}_4[x,y]/(x^2,y^2,xy,2x,2y),\,\, \mathbb{Z}_4[x]/(x^3+x+1)$
\end{lem}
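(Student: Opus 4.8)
The plan is to reduce this to a finite, mechanical enumeration problem by exploiting the structure theory of finite local rings recalled in Remark \ref{remar}. Let $(R,\mathfrak{m})$ be a finite local ring with $|\mathfrak{m}|\in\{7,8\}$. By Remark \ref{remar} there is a prime $p$ and positive integers $t,l,k$ with $\mathrm{char}(R)=p^t$, $|\mathfrak{m}|=p^l$, $|R|=p^k$, and $\mathrm{char}(R/\mathfrak{m})=p$. Since the residue field $R/\mathfrak{m}$ has order $p^{k-l}$ and $|\mathfrak{m}|=p^l$, the two cases split immediately: if $|\mathfrak{m}|=7$ then $p=7$, $l=1$, and $|R/\mathfrak{m}|=7^{k-1}$; but a local ring whose maximal ideal has prime order has $\mathfrak{m}^2=(0)$ and $\mathfrak{m}\cong R/\mathfrak{m}$ as an $R/\mathfrak{m}$-vector space, forcing $|R/\mathfrak{m}|=7$, hence $|R|=49$. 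One then checks there are exactly two such rings, namely $\mathbb{Z}_{49}$ and $\mathbb{Z}_7[x]/(x^2)$ (distinguished by whether $R$ has characteristic $49$ or $7$); this is the easy half.

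The substantive case is $|\mathfrak{m}|=8$, so $p=2$, $l=3$, and $|R/\mathfrak{m}|=2^{k-3}$. I would first bound $|R/\mathfrak{m}|$: since $\mathfrak{m}$ is a module over $R/\mathfrak{m}$ and $|\mathfrak{m}|=8$, the only possibilities are $|R/\mathfrak{m}|\in\{2,8\}$ — if $|R/\mathfrak{m}|=8$ then $\mathfrak{m}$ is a one-dimensional vector space over the field of order $8$ with $\mathfrak{m}^2=(0)$, giving $\mathbb{F}_8[x]/(x^2)$ as the unique ring. For $|R/\mathfrak{m}|=2$ we have $|R|=16$, and the classification of local rings of order $16$ is classical (it is precisely the list worked out in the references the paper already cites for rings of small order, e.g. \cite{Redmond}); I would invoke that enumeration, or reconstruct it by the standard method: write $\mathfrak{m}=(a_1,\dots,a_d)$ minimally, note $d=\dim_{\mathbb{F}_2}\mathfrak{m}/\mathfrak{m}^2\le 3$, and organize the count by the filtration $\mathfrak{m}\supseteq\mathfrak{m}^2\supseteq\mathfrak{m}^3\supseteq\mathfrak{m}^4=(0)$ together with the characteristic $\mathrm{char}(R)\in\{2,4,8,16\}$. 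Each combination of $(\,\mathrm{char}(R),\ \text{Hilbert function }(1,\dim\mathfrak{m}/\mathfrak{m}^2,\dim\mathfrak{m}^2/\mathfrak{m}^3,\dim\mathfrak{m}^3)\,)$ pins down the ring up to finitely many isomorphism types, and a direct check produces the $20$ rings listed (the $22$ minus the two rings of order $49$).

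The main obstacle is purely bookkeeping: verifying that the presentations given — e.g. distinguishing $\mathbb{Z}_4[x]/(x^2+2)$ from $\mathbb{Z}_4[x]/(x^2+3x)$ from $\mathbb{Z}_4[x]/(x^2+2x)$, or separating the various $\mathbb{Z}_4[x,y]/(\cdots)$ presentations with socle and characteristic data — are pairwise non-isomorphic and exhaust all possibilities, with no duplication and no omission. I would handle this by tabulating for each ring the invariants $\big(\mathrm{char}(R),\ |R|,\ \dim_{\mathbb{F}_2}\mathfrak{m}/\mathfrak{m}^2,\ \text{nilpotency index of }\mathfrak{m},\ \dim_{\mathbb{F}_2}\mathrm{soc}(R)\big)$ and checking that these, refined where necessary by the action of $2\in R$ on $\mathfrak{m}$, separate all entries; alternatively, and more safely, I would simply cite the existing literature on finite local rings of order $16$ (and order $49$), since that enumeration is well documented and the lemma is really just extracting from it those rings with maximal ideal of size $7$ or $8$. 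In the write-up I would present it as: the order-$49$ subcase by hand, and the order-$16$ and $\mathbb{F}_8[x]/(x^2)$ subcases by appeal to the known classification, with the invariant table included for the reader's convenience.
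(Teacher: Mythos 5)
Your overall reduction (via Remark \ref{remar}: either $|\mathfrak{m}|=7$, forcing $|R|=49$, or $|\mathfrak{m}|=8$, forcing $p=2$ and $|R/\mathfrak{m}|\in\{2,8\}$) is sound, and for the bulk of the work you end up doing exactly what the paper does, namely citing the known enumeration of small local rings in \cite[Section 5]{Redmond} (the paper's entire proof is that one citation). However, your explicit argument contains a genuine error in the subcase $|R/\mathfrak{m}|=8$: from $\mathfrak{m}^2=(0)$ and $\dim_{R/\mathfrak{m}}\mathfrak{m}=1$ you conclude that $\mathbb{F}_8[x]/(x^2)$ is ``the unique ring,'' but this tacitly assumes $R$ contains its residue field, i.e.\ $\mathrm{char}(R)=2$. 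The characteristic can also be $4$: then $2$ is a nonzero element of $\mathfrak{m}$, so $\mathfrak{m}=2R$ and $4=0$, and $R$ is the Galois ring $GR(4,3)\cong\mathbb{Z}_4[x]/(x^3+x+1)$, which is precisely the $22$nd ring in the statement and would be omitted by your case analysis. (The same characteristic dichotomy that you do invoke for $|\mathfrak{m}|=7$, distinguishing $\mathbb{Z}_{49}$ from $\mathbb{Z}_7[x]/(x^2)$, has to be applied here.) The slip is also visible in your bookkeeping: your split gives $1$ ring with $|R/\mathfrak{m}|=8$ plus the $18$ local rings of order $16$ with residue field $\mathbb{F}_2$, i.e.\ $19$ rings, not the ``$20$'' you claim for the non-$49$ part; the discrepancy is exactly the missing Galois ring.

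Apart from that, the plan is workable: the order-$49$ case is correct, the bound $|R/\mathfrak{m}|\in\{2,8\}$ is correct, and for the $18$ rings of order $16$ with residue field $\mathbb{F}_2$ your proposal to appeal to the documented classification (or to redo it by Hilbert function and characteristic, with an invariant table to check the listed presentations are pairwise non-isomorphic and exhaustive) matches the paper's approach, which simply quotes \cite{Redmond}. Fix the $|R/\mathfrak{m}|=8$ subcase as above and the proof goes through.
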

\begin{proof}
{The proof follows from \cite[Section 5]{Redmond}.}
\end{proof}
\par We are now in a position to classify toroidal annihilator graphs associated with  non-reduced ring.
 \begin{thm}\label{nonreduced}
  Let $R$ be a non-reduced ring. If  $AG(R)$ is  toroidal, then $R\cong R_1\times \cdots \times R_n$, where $ n\leq 2$. Moreover, one of the following statements hold.

$(1)$ If $n=1$, then $R$ is one of the following rings:

$\mathbb{Z}_{49}, \,\,\mathbb{Z}_7[x]/(x^2),\, \,\mathbb{Z}_{16},\,\,\mathbb{Z}_2[x]/(x^4), \,\,\mathbb{Z}_4[x]/(x^2+2), \,\,\mathbb{Z}_4[x]/(x^2+3x),\,\,\mathbb{Z}_4[x]/(x^3-2,2x^2,2x),\,\,$

$\mathbb{Z}_2[x,y]/(x^3,xy,y^2),\,\mathbb{Z}_8[x]/(2x,x^2),\,\mathbb{Z}_4[x]/(x^3,2x^2,2x),\,\mathbb{Z}_4[x]/(x^2+2x),\,\mathbb{Z}_8[x]/(2x,x^2+4),\,$
$\mathbb{Z}_2[x,y]/(x^2,y^2-xy),\,\,\mathbb{Z}_4[x,y]/(x^2,y^2-xy,xy-2,2x,2y),\,\, \mathbb{Z}_4[x,y]/(x^3,y^2,xy-2,2x,2y),\,\, $

$ \mathbb{Z}_2[x,y]/(x^2,y^2), \,\,\mathbb{Z}_4[x]/(x^2), \,\mathbb{Z}_4[x]/(x^3-x^2-2,2x^2,2x),\,\,\mathbb{Z}_2[x,y,z]/(x,y,z)^2, \,\,\mathbb{F}_8[x]/(x^2),\,\,$

$\mathbb{Z}_4[x,y]/(x^2,y^2,xy,2x,2y),\,\, \mathbb{Z}_4[x]/(x^3+x+1)$

  $(2)$ If $n=2$, then either $ R\cong \mathbb{Z}_4 \times \mathbb{Z}_3$ or $R\cong \mathbb{Z}_2[x]/(x^2) \times \mathbb{Z}_3$.
\end{thm}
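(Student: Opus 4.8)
The plan is to run the argument of Theorem \ref{comp5l1} while tracking the nilpotents, using two elementary facts. (i) If $Z(S)=\mathrm{Nil}(S)$ then $AG(S)$ is the complete graph on $\mathrm{Nil}(S)^*$: for distinct $x,y\in\mathrm{Nil}(S)^*$ pick $a$ maximal with $x^ay\ne0$; then $x^ay\in Sy$ is nonzero and killed by $x$, so $x\in Z(Sy)$, and symmetrically $y\in Z(Sx)$, so Lemma \ref{lemma1} gives the edge $x-y$. In particular every finite local ring $S$ has $AG(S)=K_{|\mathfrak m_S|-1}$. (ii) Consequently, by Lemma \ref{lemma2}(2), if $a\in\mathrm{Nil}(R_1)^*$ inside a finite product $R_1\times\cdots\times R_n$ of local rings, then $\mathrm{Nil}(R_1)^*\times R_2\times\cdots\times R_n$ induces a complete subgraph of $AG(R)$. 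Throughout, $\gamma(K_8)=2$, $\gamma(K_7)=1$ and the formula for $\gamma(K_{m,n})$ certify genera, and Theorems \ref{comp5l} and \ref{comp95l} discard the planar candidates (recall that ``toroidal'' means genus exactly $1$).

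First, since a toroidal graph has genus $1$, Theorem \ref{lemma433} forces $R$ to be finite, so $R\cong R_1\times\cdots\times R_n$ with each $R_i$ finite local, and some factor, say $R_1$, is non-reduced; in particular $|R_1|\ge4$ and $\mathrm{Nil}(R_1)^*\ne\emptyset$. For $n\ge4$ the subdivision of $K_{5,4}$ exhibited in the proof of Theorem \ref{comp5l1} already sits inside $AG(R)$, so $n\le3$. For $n=3$: if $|R_2||R_3|\ge8$ the complete subgraph $\{a\}\times R_2\times R_3$ is a $K_{\ge8}$; otherwise $\{|R_2|,|R_3|\}\subseteq\{2,3\}$ and, feeding a nonzero nilpotent of $R_1$ into the construction of Theorem \ref{comp5l1} in place of its factor of size $>2$, one builds a subdivision of $K_{5,4}$. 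Either way $\gamma(AG(R))\ge2$, a contradiction, so $n\le2$.

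Now let $n=2$, $R\cong R_1\times R_2$ with $R_1$ non-reduced. If $R_2$ is also non-reduced, the two complete subgraphs $\mathrm{Nil}(R_1)^*\times R_2$ and $R_1\times\mathrm{Nil}(R_2)^*$ together with the possible sizes of finite local rings force $|R_1|=|R_2|=4$, and the three resulting rings each contain a $K_{5,4}$-subdivision; so $R_2$ is a finite field $\mathbb F_q$. Then $\mathrm{Nil}(R_1)^*\times\mathbb F_q$ is a complete subgraph of size $(|\mathrm{Nil}(R_1)|-1)q$, which toroidality forces to be $\le7$; this leaves only finitely many candidates for $(R_1,q)$, and each of them is either planar by Theorem \ref{comp95l} (the cases $q=2$ with $R_1\cong\mathbb Z_4$ or $\mathbb Z_2[x]/(x^2)$) or contains a $K_{5,4}$- or $K_8$-subdivision, the only exceptions being $R\cong\mathbb Z_4\times\mathbb Z_3$ and $R\cong\mathbb Z_2[x]/(x^2)\times\mathbb Z_3$; a drawing of their annihilator graphs on the torus (as in the figure for $\mathbb Z_2\times\mathbb Z_2\times\mathbb Z_3$), together with their non-planarity from Theorem \ref{comp95l}, confirms these are toroidal. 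Finally, when $n=1$ the ring $R$ is finite local, so by (i) $AG(R)=K_{|\mathfrak m|-1}$; non-planarity (Theorem \ref{comp95l}) excludes $|\mathfrak m|\le5$ while $\gamma(K_8)=2$ excludes $|\mathfrak m|-1\ge8$, and since $|\mathfrak m|$ is a prime power this leaves $|\mathfrak m|\in\{7,8\}$. By Lemma \ref{list ring} these are exactly the $22$ rings of $(1)$, and each of them indeed has toroidal $AG$, having at most $7$ vertices and being non-planar.

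The only genuinely delicate step is producing the genus-$\ge2$ certificates in the $n=3$ case and among the leftover candidates of the $n=2$ case: one has to build the required subdivisions of $K_{5,4}$ or $K_8$ more or less uniformly, deciding from the adjacency criteria of Lemmas \ref{lemma1} and \ref{lemma2} which idempotent- and nilpotent-type vertices are joined. Everything else is bookkeeping with the genus formulas and the already-available planar classifications.
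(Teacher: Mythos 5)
Your skeleton matches the paper's (finiteness via Theorem \ref{lemma433}, reduction to $n\le 2$ by forbidden subgraphs, the local case via $|\mathfrak m|\in\{7,8\}$ and Lemma \ref{list ring}, and elimination of candidates in the $n=2$ case), and your preliminary facts (i) and (ii) are correct and genuinely useful. But the proposal has a real gap exactly where you flag ``delicacy'': every elimination beyond the clique count is asserted, not constructed, and at least one of the blanket assertions is false. You claim that each leftover $n=2$ candidate not covered by planarity ``contains a $K_{5,4}$- or $K_8$-subdivision.'' For $R\cong\mathbb{Z}_4\times\mathbb{F}_4$ (and $\mathbb{Z}_2[x]/(x^2)\times\mathbb{F}_4$), which survives your clique bound since $(|\mathrm{Nil}(R_1)|-1)q=4\le 7$, this is impossible: $AG(R)$ has exactly $9$ vertices and, as the paper computes, equals $K_{3,6}$ together with a triangle on three vertices of the $6$-side, so only six vertices have degree at least $4$ and the maximum degree is $6$. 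A subdivision of $K_{5,4}$ inside a $9$-vertex graph would have to be $K_{5,4}$ itself, requiring nine vertices of degree at least $4$, and a subdivision of $K_8$ would require eight vertices of degree at least $7$; neither exists. Since $K_{3,6}$ is itself toroidal, ruling this ring out needs a different, finer argument (the paper argues directly that $K_{3,6}$ plus such a triangle does not embed in the torus), and your proposed mechanism cannot supply it.

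More broadly, your route to the $n=2$ classification via the bound $(|\mathrm{Nil}(R_1)|-1)q\le 7$ leaves a substantially longer candidate list than the paper's (e.g.\ $\mathbb{Z}_8\times\mathbb{Z}_2$, $\mathbb{Z}_9\times\mathbb{Z}_2$, $\mathbb{Z}_9\times\mathbb{Z}_3$, $\mathbb{Z}_4\times\mathbb{F}_5$, $\mathbb{Z}_4\times\mathbb{F}_7$, all local $R_1$ of order $8$ with $|\mathfrak m_1|=4$ times $\mathbb{Z}_2$, the three products of two non-reduced rings of order $4$, and the $n=3$ cases with $|R_2||R_3|\le 7$), and for none of these do you exhibit the required configuration. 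The paper's proof consists precisely of such explicit certificates: a $K_{5,4}$-subdivision on nine named vertices for $n=3$, a $K_{7,4}$-subdivision forcing $|\mathfrak m_1|=2$, a $K_{5,4}$-subdivision forcing $R_2$ to be a field, a $K_{8,3}$-subdivision forcing $|R_2|\le 4$, and the ad hoc analysis of $\mathbb{Z}_4\times\mathbb{F}_4$. Until you produce analogous certificates for your (larger) list, and replace the false $K_{5,4}/K_8$ claim for the $\mathbb{F}_4$ factor by a genuine genus-two argument, the proof is not complete.
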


\begin{proof}
{ By Theorem   \ref{lemma433}, $R$ is finite and so is an Artinian ring. Thus $R\cong R_1\times \cdots \times R_n$, where $ n\geq 1$. Let $R\cong R_1\times \cdots \times R_n$, where $ n\geq 3$. Since $R$ is a non-reduced ring, we can suppose that  $\mathrm{Nil}(R_1)\neq(0)$. This implies that $|U(R_1)| \geq 2$. Let $a\in \mathrm{Nil}(R_1)^*$  and $1\neq u\in U(R_1)$.  We show that $AG(R)$ is not a toroidal graph. To see this, we need only to check the case $n=3$. But if $n=3$, then by  Lemma \ref{lemma2}, one may see that the vertices of the set $\{(1,0,0),(u,0,0),(1,1,0),(u,1,0),(a,1,0) \}$ and the vertices contained in the set $\{(0,1,1),(0,0,1),(a,0,1),(a,1,1) \}$ form a subgraph which contains a subdivision of $K_{5,4}$, a contradiction. So $n\leq 2$.

$(1)$ Let $(R,\mathfrak{m})$  be a local ring.  By Theorem   \ref{lemma433}, $R$ is finite. So  $|R|=p^k$ and $|\mathfrak{m}|=p^l$, for some prime number  $p$ and some integers $k,l$. If $| \mathfrak{m}|>8 $, then by \cite[Theorem 3.10]{Badawi}, $AG(R)$ is a not  toroidal graph. Thus  $|\mathfrak{m}|\leq 8$. Since  $|\mathfrak{m}|\geq 6$ and $|\mathfrak{m}|=p^l$, for some prime number $p$ and for some integer $l$, we deduce that either $|\mathfrak{m}|= 8$ or $|\mathfrak{m}|= 7$. Thus by Lemma \ref{list ring}, the result holds.

 $(2)$ Suppose that $R\cong R_1\times R_2$, where $(R_i,\mathfrak{m}_i )$ is a finite local ring, for $1\leq i \leq 2$.  With no loss of generality, suppose that  $\mathrm{Nil}(R_1)\neq(0)$. First, we show that $|\mathfrak{m}_1|=2$. If $|\mathfrak{m}_1|>2$, then $|R_1|\geq 9$. So the vertices of the set $\{(1,0),(a_1,0),(a_2,0),(a_3,0),(a_4,0),(a_5,0),(a_6,0)\}$ and vertices contained in the $\{(0,1),(0,1),(a,1),(b,1) \}$ form a subgraph which  contains a subdivision of $K_{7,4}$ where $a,b \in\mathrm{Nil}(R_1)^* $ and $a_i\in R_1\setminus \{0,1\}$ for $1\leq i \leq 6$, a contradiction. Hence $|\mathfrak{m}_1|=2$. Thus either $R_1=\mathbb{Z}_4 $ or $R_1=\mathbb{Z}_2[x]/(x^2)$. Next, we show that $R_2$ is a field. To see this,  let $a\in \mathfrak{m}_2^*$ and $R_1=\mathbb{Z}_4 $. Then  by Lemma \ref{lemma2}, the vertices contained in two sets $\{(2,a),(2,1),(0,1),(0,a) \}$ and $\{(1,a),(3,a),(1,0),(3,0),(2,0) \}$ form a subgraph which  contains a subdivision of $K_{5,4}$, a contradiction. Therefore, $R_2$ is a field. If $|R_2|\geq 5$ and $R_1=\mathbb{Z}_4 $, then the vertices contained in two  sets $$\{(2,1),(2,a_1),(2,a_2),(2,a_3),(0,1),(0,a_1),(0,a_2),(0,a_3) \}$$ and $\{(1,0),(2,0),(3,0) \}$ form a subgraph which  contains a subdivision of $K_{8,3}$, a contradiction. This implies that $R_2=\mathbb{Z}_2$, $R_2=\mathbb{Z}_3$ or $R_2=\mathbb{F}_4$. If $R_2=\mathbb{Z}_2$, then  by \cite[Theorem 3.16]{Badawi}, $AG(R)=K_{2,3}$ and so  $AG(R)$ is not a toroidal graph. If $R_2=\mathbb{Z}_3$, then we can easily check that  $AG(R)$ contains  $K_{3,3}$ as a subgraph and since in this case $|V(AG(R))|=7$, we conclude that $AG(R)$ is a toroidal graph. Hence if $R_2= \mathbb{Z}_3 $, then there are two rings that $AG(R)$ is a toroidal graph. They are: $ \mathbb{Z}_4 \times \mathbb{Z}_3,\mathbb{Z}_2[x]/(x^2) \times \mathbb{Z}_3 $.

If $R_2=\mathbb{F}_4$, then $R=\mathbb{Z}_4\times \mathbb{F}_4$. Assume that $\mathbb{F}_4=\{0, u_1, u_2, u_3\}$. Let $x=(1,0), y=(2,0), z=(3,0), a=(0,u_1), b=(0,u_2), c=(0,u_3), d=(2,u_1), e=(2,u_2), f=(2,u_3), V_1=\{x,y,z \}$ and $V_2=\{a,b,c,d,e,f\} $. It is not hard to check that $AG(R)$ is $K_{|V_1|,|V_2|}$  together with a triangle in $V_2$. Therefore, $AG(R)$ is not a toroidal graph and so the proof is complete.}
\end{proof}
{}

\end{document}